%% This is file `elsarticle-template-1a-num.tex',
%%
%% Copyright 2009 Elsevier Ltd
%%
%% This file is part of the 'Elsarticle Bundle'.
%% ---------------------------------------------
%%
%% It may be distributed under the conditions of the LaTeX Project Public
%% License, either version 1.2 of this license or (at your option) any
%% later version.  The latest version of this license is in
%%    http://www.latex-project.org/lppl.txt
%% and version 1.2 or later is part of all distributions of LaTeX
%% version 1999/12/01 or later.
%%
%% The list of all files belonging to the 'Elsarticle Bundle' is
%% given in the file `manifest.txt'.
%%
%% Template article for Elsevier's document class `elsarticle'
%% with numbered style bibliographic references
%%
%% $Id: elsarticle-template-1a-num.tex 151 2009-10-08 05:18:25Z rishi $
%% $URL: http://lenova.river-valley.com/svn/elsbst/trunk/elsarticle-template-1a-num.tex $
%% \documentclass[preprint,12pt]{elsarticle}

%% Use the option review to obtain double line spacing
%% \documentclass[preprint,review,12pt]{elsarticle}

%% Use the options 1p,twocolumn; 3p; 3p,twocolumn; 5p; or 5p,twocolumn
%% for a journal layout:
%% \documentclass[final,1p,times]{elsarticle}
%% \documentclass[final,1p,times,twocolumn]{elsarticle}
 \documentclass[final,3p,times]{elsarticle}
%% \documentclass[final,3p,times,twocolumn]{elsarticle}
%% \documentclass[final,5p,times]{elsarticle}
%% \documentclass[final,5p,times,twocolumn]{elsarticle}

%% if you use PostScript figures in your article
%% use the graphics package for simple commands
%% \usepackage{graphics}
%% or use the graphicx package for more complicated commands
%% \usepackage{graphicx}
%% or use the epsfig package if you prefer to use the old commands
%% \usepackage{epsfig}

%% The amssymb package provides various useful mathematical symbols
\usepackage{amssymb}
%% The amsthm package provides extended theorem environments
\usepackage{amsthm}

%% The lineno packages adds line numbers. Start line numbering with
%% \begin{linenumbers}, end it with \end{linenumbers}. Or switch it on
%% for the whole article with \linenumbers after \end{frontmatter}.
%% \usepackage{lineno}

%% natbib.sty is loaded by default. However, natbib options can be
%% provided with \biboptions{...} command. Following options are
%% valid:

%%   round  -  round parentheses are used (default)
%%   square -  square brackets are used   [option]
%%   curly  -  curly braces are used      {option}
%%   angle  -  angle brackets are used    <option>
%%   semicolon  -  multiple citations separated by semi-colon
%%   colon  - same as semicolon, an earlier confusion
%%   comma  -  separated by comma
%%   numbers-  selects numerical citations
%%   super  -  numerical citations as superscripts
%%   sort   -  sorts multiple citations according to order in ref. list
%%   sort&compress   -  like sort, but also compresses numerical citations
%%   compress - compresses without sorting
%%
%% \biboptions{comma,round}

% \biboptions{}

\journal{ }

%%%%%%%%%%%%%%%%%%%%%%%%%%%%%%%%%%%%%%%%%%%%%%%%%%
%%%%%%%%%%%%%%%%%%%%%%%%%%%%%%%%%%%%%%%%%%%%%%%%%%
%%%%%%%%%%%%%%%%%%%%%%%%%%%%%%%%%%%%%%%%%%%%%%%%%%
%%%%%%%%%%%%%%%%%%%%%%%%%%%%%%%%%%%%%%%%%%%%%%%%%%
\usepackage[all]{xy}%%%%%%%%%%%%%%%%%%%%%%%%%%%%%%
\usepackage{amsmath,amscd}%%%%%%%%%%%%%%%%%%%%%%%%
%%%%%%%%%%%%%%%%%%%%%%%%%%%%%%%%%%%%%%%%%%%%%%%%%%
%%%%%%%%%%%%%%%%%%%%%%%%%%%%%%%%%%%%%%%%%%%%%%%%%%
\newtheorem{lem}{Lemma}[section]%%%%%%%%%%%%%%%%%%
\newtheorem{prop}[lem]{Proposition}%%%%%%%%%%%%%%%
\newtheorem{cor}[lem]{Corollary}%%%%%%%%%%%%%%%%%%
\newtheorem{thm}[lem]{Theorem}%%%%%%%%%%%%%%%%%%%%
\theoremstyle{definition}%%%%%%%%%%%%%%%%%%%%%%%%%
\newtheorem{defin}[lem]{Definition}%%%%%%%%%%%%%%%
\newtheorem{remark}[lem]{Remark}%%%%%%%%%%%%%%%%%%
%%%%%%%%%%%%%%%%
%%%%%%%%%%%%%%%%%%%%%%%%%%%%%%%%%%%%%%%%%%%%%%%%%%
%%%%%%%%%%%%%%%%%%%%%%%%%%%%%%%%%%%%%%%%%%%%%%%%%%
%%%%%%%%%%%%%%%%%%%%%%%%%%%%%%%%%%%%%%%%%%%%%%%%%%

\begin{document}

\begin{frontmatter}

%% Title, authors and addresses

%% use the tnoteref command within \title for footnotes;
%% use the tnotetext command for the associated footnote;
%% use the fnref command within \author or \address for footnotes;
%% use the fntext command for the associated footnote;
%% use the corref command within \author for corresponding author footnotes;
%% use the cortext command for the associated footnote;
%% use the ead command for the email address,
%% and the form \ead[url] for the home page:
%%
%% \title{Title\tnoteref{label1}}
%% \tnotetext[label1]{}
%% \author{Name\corref{cor1}\fnref{label2}}
%% \ead{email address}
%% \ead[url]{home page}
%% \fntext[label2]{}
%% \cortext[cor1]{}
%% \address{Address\fnref{label3}}
%% \fntext[label3]{}

\title{Hochschild cohomology for a class of some 
self-injective special biserial algebras of rank four}

%% use optional labels to link authors explicitly to addresses:
%% \author[label1,label2]{<author name>}
%% \address[label1]{<address>}
%% \address[label2]{<address>}

\author{Takahiko Furuya\footnote{E-mail address: 
furuya@dent.meikai.ac.jp}}

\address{School of Dentistry, Meikai University, 
1-1 Keyakidai, Sakado, Saitama, Japan}

\begin{abstract}
In this paper, we construct an explicit minimal projective 
bimodule resolution of a self-injective special biserial 
algebra $A_{T}$ ($T\geq0$) whose Grothendieck group is of rank 
$4$. As a main result, we determine the dimension of the 
Hochschild cohomology group ${\rm HH}^{i}(A_{T})$ of $A_{T}$ 
for $i\geq0$, completely. Moreover we give a presentation of 
the Hochschild cohomology ring modulo nilpotence ${\rm HH}^{*}
(A_{T})/\mathcal{N}_{A_{T}}$ of $A_{T}$ by generators and 
relations in the case where $T=0$. 
\end{abstract}

\begin{keyword}
Hochschild cohomology, self-injective algebra, Koszul algebra. 
\MSC[2010] 16E05 \sep 16E40.
%% keywords here, in the form: keyword \sep keyword
%% MSC codes here, in the form: \MSC code \sep code
%% or \MSC[2008] code \sep code (2000 is the default)
\end{keyword}

\end{frontmatter}

%%
%% Start line numbering here if you want
%%
% \linenumbers

%% main text
\section{Introduction}
\label{introduction}

Let $\Gamma$ be the following circular quiver with four vertices 
$0$, $1$, $2$, $3$ and eight arrows $a_{i}$, $b_{i}$ for 
$i=0,1,2,3$: 
$$\begin{xy}
(0,0)		*+{3}="e3",
(18,0)		*+{2}="e2",
(18,18)		*+{1}="e1",
(0,18)		*+{0}="e0",
(9.5,21)	*{a_{0}},
(9.5,14.5)	*{b_{0}},
(22,9)		*{a_{1}},
(15,9)		*{b_{1}},
(9.5,-3.5)	*{a_{2}},
(9.5,3.5)	*{b_{2}},
(-3.5,9)	*{a_{3}},
(3.5,9)		*{b_{3}},
\ar @<1mm> "e0"; "e1",
\ar @<-1mm> "e0"; "e1",
\ar @<1mm> "e1"; "e2",
\ar @<-1mm> "e1"; "e2",
\ar @<1mm> "e2"; "e3",
\ar @<-1mm> "e2"; "e3",
\ar @<1mm> "e3"; "e0",
\ar @<-1mm> "e3"; "e0",
\end{xy}$$
Denote the trivial path corresponding to the vertex $i$ by 
$e_{i}$ for $0\leq i\leq3$. We always consider the subscripts 
$i$ of $e_{i}$, $a_{i}$ and $b_{i}$ as modulo $4$. Therefore 
the arrows $a_{i}$ and $b_{i}$ start at $e_{i}$ and end with 
$e_{i+1}$ for all $i\in\mathbb{Z}$. Paths are written from left 
to right.

Let $K$ be an algebraically closed field, and let $K\Gamma$ be 
the path algebra of $\Gamma$ over $K$. We set $x:=\sum_{i=0}^{3}
a_{i}\in K\Gamma$ and $y:=\sum_{i=0}^{3}b_{i}\in K\Gamma$. Then, 
for integers $0\leq i\leq3$ and $j\geq0$, the elements $e_{i}x^{j}$ 
and $e_{i}y^{j}$ are precisely the paths $a_{i}a_{i+1}\cdots 
a_{i+j-1}$ and $b_{i}b_{i+1}\cdots b_{i+j-1}$ of length 
$j$ respectively, so that $e_{i}x^{j}=e_{i}x^{j}e_{i+j}=x^{j}
e_{i+j}$ and $e_{i}y^{j}=e_{i}y^{j}e_{i+j}=y^{j}e_{i+j}$ hold. 
Fix an integer $T\geq0$, and let $I_{T}$ be the ideal in $K\Gamma$ 
generated by the elements $xy$, $x^{4T+2}+y^{4T+2}$ and $yx$, 
that is, $I_{T}:=\langle xy,\ x^{4T+2}+y^{4T+2},\ yx\rangle$. 
Define the algebra $A_{T}$ to be the quotient $K\Gamma/I_{T}$. 
We then see that the set $\{e_{i}x^{j},\:e_{i}y^{l}\mid0\leq 
i\leq3;\ 0\leq j\leq4T+2;\ 1\leq l\leq 4T+1\}$ is a $K$-basis 
of $A_{T}$, so that $\dim_{K}A_{T}=16(2T+1)$. Furthermore 
$A_{T}$ is a self-injective special biserial algebra, and 
hence is of tame representation type. In 
particular, if $T=0$, then we see that $A_{0}$ is a Koszul 
algebra of radical cube zero (see Proposition~\ref{koszul}).

The purpose in this paper is to investigate the Hochschild 
cohomology for $A_{T}$. The Hochschild cohomology groups and 
rings of algebras are important invariants in the representation 
theory of algebras, and have been studied by many researchers. 
However, in general, it is not easy to describe their structures, 
even if the given algebras are easier to deal with.

Recently, in the papers \cite{ES,PS,SS1,ST}, the Hochschild 
cohomology groups or rings of certain finite-dimensional 
self-injective algebras were described, where the authors 
provided projective bimodule resolutions by using certain 
sets $\mathcal{G}^{n}$ found in \cite{GSZ}. These sets are also 
used in the papers \cite{F,GHMS,GS} in constructing projective 
bimodule resolutions. In this paper, following this technique, 
we give a projective bimodule resolution of $A_{T}$ for all 
$T\geq0$, and then study its Hochschild cohomology.

One important problem in the study of Hochschild cohomology 
is to find necessarily and sufficient conditions for the 
Hochschild cohomology ring modulo nilpotence to be finitely 
generated as an algebra. So far it has been proved that the 
Hochschild cohomology rings modulo nilpotence for several 
classes of finite-dimensional algebras, such as group algebras 
\cite{E,V}, self-injective algebras of finite representation 
type \cite{GSS1}, monomial algebras \cite{GSS2}, are finitely 
generated. Also, some examples of infinitely generated 
Hochschild cohomology rings modulo nilpotence can be found 
in \cite{S, X}. However, a definitive answer to this problem 
has not yet been obtained. In this paper, we show that the 
Hochschild cohomology ring modulo nilpotence of $A_{T}$ is 
finitely generated in the case where $T=0$.

This paper is organized as follows: In 
Section~\ref{section_resolution}, we construct sets 
$\mathcal{G}^{n}$ $(n\geq0)$ for the right $A_{T}$-module 
$A_{T}/\mathfrak{r}_{A_{T}}$, and then give an explicit 
minimal projective bimodule resolution of $A_{T}$. In 
Section~\ref{section_group}, we find a $K$-basis of the 
Hochschild cohomology group ${\rm HH}^{i}(A_{T})$ $(i\geq0)$ 
of $A_{T}$ for all $T\geq0$, and then determine its dimension, 
completely. In Section~\ref{modulo_nilpotence}, we give a 
presentation by generators and relations of the Hochschild 
cohomology ring modulo nilpotence ${\rm HH}^{*}(A_{T})/
\mathcal{N}_{A_{T}}$ in the Koszul case, $T=0$, and show that 
${\rm HH}^{*}(A_{0})/\mathcal{N}_{A_{0}}$ is finitely generated.

For every arrow $c$ in $\Gamma$, we denote the origin by 
$\mathfrak{o}(c)$ and the terminus by $\mathfrak{t}(c)$. For 
simplicity we write $\otimes_{K}$ as $\otimes$. Moreover we 
denote the enveloping algebra $A_{T}^{\rm op}\otimes A_{T}$ 
of $A_{T}$ by $A_{T}^{\rm e}$. Note that there is a natural 
one to one correspondence between the family of 
$A_{T}$-$A_{T}$-bimodules and that of right 
$A_{T}^{\rm e}$-modules. We also denote the Jacobson radical 
of $A_{T}$ by $\mathfrak{r}_{A_{T}}$.

\section{The sets $\mathcal{G}^{n}$ and a projective bimodule 
resolution $(Q^{\bullet},\partial)$ of $A_{T}$}
\label{section_resolution}

Let $\Lambda=K\mathcal{Q}/I$ be any finite-dimensional $K$-algebra 
with $\mathcal{Q}$ a finite quiver, and $I$ an admissible ideal 
in $K\mathcal{Q}$, and let $\mathfrak{r}_{\Lambda}$ be the 
Jacobson radical of $\Lambda$. We denote by $\mathcal{G}^{0}$ 
the set of all vertices of $\mathcal{Q}$, by $\mathcal{G}^{1}$ 
the set of all arrows of $\mathcal{Q}$, and by $\mathcal{G}^{2}$ 
a minimal set of uniform generators of $I$. In \cite{GSZ}, Green, 
Solberg and Zacharia showed that, for each $n\geq3$, there is 
a set $\mathcal{G}^{n}$ of uniform elements in $K\mathcal{Q}$ 
such that we have a minimal projective resolution $(P^{\bullet},d)$ 
of the right $\Lambda$-module $\Lambda/\mathfrak{r}_{\Lambda}$ 
satisfying the following conditions: 
\begin{enumerate}[(a)]
\item For $n\geq0$, $P^{n}=\bigoplus_{x\in\mathcal{G}^{n}}
\mathfrak{t}(x)\Lambda$.

\item For $x\in\mathcal{G}^{n}$, there are unique elements 
$r_{y},s_{z}\in K\mathcal{Q}$, where $y\in\mathcal{G}^{n-1}$ 
and $z\in\mathcal{G}^{n-2}$, such that $x=\sum_{y\in
\mathcal{G}^{n-1}}yr_{y}=\sum_{z\in\mathcal{G}^{n-2}}zs_{z}$.

\item For $n\geq1$, the differential $d^{n}:P^{n}\rightarrow 
P^{n-1}$ is defined by $d^{n}(\mathfrak{t}(x)\lambda):=
\sum_{y\in\mathcal{G}^{n-1}}r_{y}\mathfrak{t}(x)\lambda$ for 
$x\in\mathcal{G}^{n}$ and $\lambda\in\Lambda$, where $r_{y}$ 
denotes the element in the expression (b).
\end{enumerate}
In this section, we will construct sets $\mathcal{G}^{n}$ 
$(n\geq0)$ for the right $A_{T}$-module $A_{T}/
\mathfrak{r}_{A_{T}}$, and then use them to give a projective 
bimodule resolution $(Q^{\bullet},\partial)$ of $A_{T}$.

\subsection{Sets $\mathcal{G}^{n}$ for $A_{T}/
\mathfrak{r}_{A_{T}}$}
First, in order to give sets $\mathcal{G}^{n}$ ($n\geq0$) 
for $A_{T}/\mathfrak{r}_{A_{T}}$, we introduce the following 
elements in $K\Gamma$:

\begin{defin}
For $0\leq i\leq3$, we put $g_{i,0}^{0}:=e_{i}$. Furthermore, 
for $n\geq1$, we inductively define the elements $g_{i,j}^{n}
\in K\Gamma$ for $0\leq i\leq3$ and $0\leq j\leq n$ as follows: 
\begin{enumerate}[(a)]
\item If $n=2m+1$ with $m\geq0$, then  
$$g_{i,j}^{2m+1}:=\begin{cases}
g_{i,0}^{2m}x 	& \mbox{if $i=0,2$ and $j=0$}\\ 
g_{i,0}^{2m}y 	& \mbox{if $i=1,3$ and $j=0$}\\ 
g_{i,j-1}^{2m}y^{4T+1}+g_{i,j}^{2m}x 
		& \mbox{if $i=0,2$ and $1\leq j\leq m$}\\
g_{i,j-1}^{2m}x^{4T+1}+g_{i,j}^{2m}y 
		& \mbox{if $i=1,3$ and $1\leq j\leq m$}\\
g_{i,j-1}^{2m}y+g_{i,j}^{2m}x^{4T+1}
		& \mbox{if $i=0,2$ and $m+1\leq j\leq2m$}\\
g_{i,j-1}^{2m}x+g_{i,j}^{2m}y^{4T+1}
		& \mbox{if $i=1,3$ and $m+1\leq j\leq2m$}\\
g_{i,2m}^{2m}y 	& \mbox{if $i=0,2$ and $j=2m+1$}\\
g_{i,2m}^{2m}x 	& \mbox{if $i=1,3$ and $j=2m+1$.}
\end{cases}$$
\item If $n=2m$ with $m\geq1$, then  
$$g_{i,j}^{2m}:=\begin{cases}
g_{i,0}^{2m-1}y & \mbox{if $i=0,2$ and $j=0$}\\
g_{i,0}^{2m-1}x & \mbox{if $i=1,3$ and $j=0$}\\
g_{i,j-1}^{2m-1}x^{4T+1}+g_{i,j}^{2m-1}y 
		& \mbox{if $i=0,2$ and $1\leq j\leq m-1$}\\
g_{i,j-1}^{2m-1}y^{4T+1}+g_{i,j}^{2m-1}x 
		& \mbox{if $i=1,3$ and $1\leq j\leq m-1$}\\
g_{i,m-1}^{2m-1}x^{4T+1}+g_{i,m}^{2m-1}y^{4T+1} 
		& \mbox{if $i=0,2$ and $j=m$}\\
g_{i,m-1}^{2m-1}y^{4T+1}+g_{i,m}^{2m-1}x^{4T+1} 
		& \mbox{if $i=1,3$ and $j=m$}\\
g_{i,j-1}^{2m-1}x+g_{i,j}^{2m-1}y^{4T+1}
		& \mbox{if $i=0,2$ and $m+1\leq j\leq 2m-1$}\\
g_{i,j-1}^{2m-1}y+g_{i,j}^{2m-1}x^{4T+1}
		& \mbox{if $i=1,3$ and $m+1\leq j\leq 2m-1$}\\
g_{i,2m-1}^{2m-1}x& \mbox{if $i=0,2$ and $j=2m$}\\
g_{i,2m-1}^{2m-1}y& \mbox{if $i=1,3$ and $j=2m$.}
\end{cases}$$

\end{enumerate}
\end{defin}
\noindent 
We immediately see that these elements $g_{i,j}^{n}$ are 
uniform.

Now we put the set 
$$\mathcal{G}^{n}:=\left\{g^{n}_{i,j}\,\Big|\,0\leq i\leq3;\ 
0\leq j\leq n\right\}$$
for all $n\geq0$.

\begin{remark}\label{rem1}
\begin{enumerate}[(a)]
\item For all $n\geq0$, $0\leq i\leq3$ and $0\leq j\leq n$, 
we have $\mathfrak{o}(g_{i,j}^{n})=e_{i}$. Also, if $i+n\equiv t$ 
(${\rm mod}\,4$), then $\mathfrak{t}(g_{i,j}^{n})=e_{t}$. 
\item We get 
\begin{align*}
\mathcal{G}^{0}&=\{e_{i}\mid0\leq i\leq3\},\\
\mathcal{G}^{1}&=\{a_{i},\:b_{i}\mid0\leq i\leq3\},\\
\mathcal{G}^{2}&=\left\{e_{i}xy,\:e_{i}(x^{4T+2}+y^{4T+2}),
\:e_{i}yx\ \big|\ 0\leq i\leq3\right\}, 
\end{align*}
and so $\mathcal{G}^{2}$ is a minimal set of generators of $I_{T}$. 
\end{enumerate}
\end{remark}

\noindent
It is not hard to check that these sets satisfy the conditions 
(a), (b), and (c) in the beginning of this section.

Now it can be seen that, for all $T\geq0$, $A_{T}$ is a 
self-injective algebra. Moreover, if $T=0$, then we have 
the following proposition.

\begin{prop}\label{koszul}
The algebra $A_{0}$ is a self-injective Koszul algebra. 
\end{prop}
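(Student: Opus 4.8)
The plan is to treat the two properties separately. Self-injectivity has already been recorded for all $T\geq0$ in the sentence preceding the statement, so for the proposition I would simply specialize that assertion to $T=0$; if a self-contained argument is wanted, one reads it off the $K$-basis. The indecomposable projective right module $e_{i}A_{0}$ has basis $\{e_{i},\,e_{i}x,\,e_{i}y,\,e_{i}x^{2}\}$, and since $e_{i}xy=e_{i}yx=0$ and $e_{i}y^{2}=-e_{i}x^{2}$ in $A_{0}$, its radical square is the one-dimensional space $Ke_{i}x^{2}$ concentrated at the vertex $i+2$. Thus every $e_{i}A_{0}$ has simple socle $\cong S_{i+2}$, and because $i\mapsto i+2$ permutes the vertices, $A_{0}$ is self-injective.

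The substantive part is Koszulity, and here the essential point is that for $T=0$ one has $4T+1=1$ and $4T+2=2$. First I would note that $A_{0}=K\Gamma/\langle xy,\,x^{2}+y^{2},\,yx\rangle$ carries the path-length grading, with $(A_{0})_{0}$ semisimple and $A_{0}/\mathfrak{r}_{A_{0}}=(A_{0})_{0}$, and that by Remark~\ref{rem1} its defining ideal has a minimal generating set $\mathcal{G}^{2}$ lying in degree $2$; hence $A_{0}$ is a quadratic graded algebra generated in degree $1$. I would then invoke the standard criterion that such an algebra is Koszul exactly when the minimal graded projective resolution of $A_{0}/\mathfrak{r}_{A_{0}}$ is linear, that is, when $P^{n}$ is generated in degree $n$ for all $n$. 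Since the sets $\mathcal{G}^{n}$ have already been shown to furnish this minimal resolution with $P^{n}=\bigoplus_{g\in\mathcal{G}^{n}}\mathfrak{t}(g)A_{0}$, Koszulity reduces to the single claim that each generator $g_{i,j}^{n}$ is homogeneous of degree $n$.

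I would prove this claim by induction on $n$. The base cases are immediate: $g_{i,0}^{0}=e_{i}$ has degree $0$, and $\mathcal{G}^{1}=\{a_{i},b_{i}\}$ consists of arrows of degree $1$. For the inductive step I substitute $T=0$ into the defining recursions, whereupon every factor $x^{4T+1}$ or $y^{4T+1}$ collapses to a single arrow $x$ or $y$ of degree $1$. Each resulting case then writes $g_{i,j}^{n}$ either as a degree-$(n-1)$ element times one arrow, or as a sum of two such terms; in the latter case both summands are products of a degree-$(n-1)$ element with a single arrow, so they lie in the same graded component. By the induction hypothesis every term has degree $(n-1)+1=n$, so $g_{i,j}^{n}$ is homogeneous of degree $n$. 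This establishes linearity of the resolution, and hence the Koszulity of $A_{0}$.

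The only real difficulty is bookkeeping: one must run the induction uniformly through all the cases of the definition and verify that the two summands in the ``interior'' formulas land in the same degree, which is guaranteed precisely by the collapse of $x^{4T+1},y^{4T+1}$ to single arrows when $T=0$. This also pinpoints why the hypothesis $T=0$ cannot be relaxed: for $T\geq1$ the relation $x^{4T+2}+y^{4T+2}$ has degree $4T+2>2$, so $A_{T}$ is no longer quadratic, and correspondingly the mixed formulas combine terms of degree $(n-1)+(4T+1)>n$, so the generators $g_{i,j}^{n}$ fail to be homogeneous of degree $n$ and no linear resolution can exist.
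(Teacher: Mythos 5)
Your proof is correct and follows the same route as the paper: the paper's one-line argument is precisely that the resolution $(P^{\bullet},d)$ built from the sets $\mathcal{G}^{n}$ becomes linear when $T=0$, which is exactly the homogeneity claim you verify by induction (your added socle computation for self-injectivity and the quadraticity remark are details the paper leaves implicit). No gaps; you have simply written out what the paper's ``we notice that the resolution is linear'' asserts.
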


\begin{proof}
If $T=0$, then we notice that the resolution $(P^{\bullet},d)$ 
determined by (a), (b) and (c) above is a linear resolution of 
$A_{0}/\mathfrak{r}_{A_{0}}$, and hence $A_{0}$ is a Koszul 
self-injective algebra. 
\end{proof}

\subsection{A projective bimodule resolution of $A_{T}$}
Now we give a projective bimodule resolution $(Q^{\bullet}, 
\partial)$ for $A_{T}$. For simplicity we denote the element 
$\mathfrak{o}(g_{i,j}^{n})\otimes\mathfrak{t}(g_{i,j}^{n})$ 
in $A_{T}\mathfrak{o}(g_{i,j}^{n})\otimes\mathfrak{t}
(g_{i,j}^{n})A_{T}$ by $\mathfrak{a}_{i,j}^{n}$ for $n\geq0$, 
$0\leq i\leq3$, and $0\leq j\leq n$. For $n\geq0$, we define 
the projective $A_{T}$-$A_{T}$-bimodule $Q^{n}$ by 
$$Q^{n}:=\bigoplus_{g\in\mathcal{G}^{n}}A_{T}\mathfrak{o}(g)
\otimes\mathfrak{t}(g)A_{T}=\bigoplus_{i=0}^{3}
\Bigg[\bigoplus_{j=0}^{n}A_{T}\mathfrak{a}_{i,j}^{n}A_{T}
\Bigg].$$
Furthermore we define the map $\partial^{n}$ as follows: 
\begin{defin}
Define $\partial^{0}:Q^{0}\rightarrow A_{T}$ to be the multiplication 
map, and for $n\geq1$ define $\partial^{n}:Q^{n}\rightarrow Q^{n-1}$ 
to be the homomorphism of $A_{T}$-$A_{T}$-bimodules determined 
by the following formulas. Here, we note that the lower left 
subscripts $i$ of $\mathfrak{a}_{i,j}^n$ are considered as 
modulo $4$. 
\begin{enumerate}[(a)]

\item If $n=2m+1$ for $m\geq0$, then, for $0\leq i\leq3$ and 
$0\leq j\leq 2m+1$, 
$$\partial^{2m+1}(\mathfrak{a}_{i,j}^{2m+1}):=
\begin{cases}
\mathfrak{a}_{i,0}^{2m}x-x\mathfrak{a}_{i+1,0}^{2m} 
		& \mbox{if $i=0,2$ and $j=0$}\\
\mathfrak{a}_{i,0}^{2m}y-y\mathfrak{a}_{i+1,0}^{2m} 
		& \mbox{if $i=1,3$ and $j=0$}\\
\mathfrak{a}_{i,j-1}^{2m}y^{4T+1}+\mathfrak{a}_{i,j}^{2m}x-x
\mathfrak{a}_{i+1,j}^{2m}-y^{4T+1}\mathfrak{a}_{i+1,j-1}^{2m} 
		& \mbox{if $i=0,2$ and $1\leq j\leq m$}\\
\mathfrak{a}_{i,j-1}^{2m}x^{4T+1}+\mathfrak{a}_{i,j}^{2m}y-y
\mathfrak{a}_{i+1,j}^{2m}-x^{4T+1}\mathfrak{a}_{i+1,j-1}^{2m} 
		& \mbox{if $i=1,3$ and $1\leq j\leq m$}\\
\mathfrak{a}_{i,j-1}^{2m}y+\mathfrak{a}_{i,j}^{2m}x^{4T+1}-x^{4T+1}
\mathfrak{a}_{i+1,j}^{2m}-y\mathfrak{a}_{i+1,j-1}^{2m} 
		& \mbox{if $i=0,2$ and $m+1\leq j\leq 2m$}\\
\mathfrak{a}_{i,j-1}^{2m}x+\mathfrak{a}_{i,j}^{2m}y^{4T+1}-y^{4T+1}
\mathfrak{a}_{i+1,j}^{2m}-x\mathfrak{a}_{i+1,j-1}^{2m} 
		& \mbox{if $i=1,3$ and $m+1\leq j\leq2m$}\\
\mathfrak{a}_{i,2m}^{2m}y-y\mathfrak{a}_{i+1,2m}^{2m}
		& \mbox{if $i=0,2$ and $j=2m+1$}\\
\mathfrak{a}_{i,2m}^{2m}x-x\mathfrak{a}_{i+1,2m}^{2m}
		& \mbox{if $i=1,3$ and $j=2m+1$.} 
\end{cases}$$

\item If $n=2m$ for $m\geq1$, then, for $0\leq i\leq3$ and 
$0\leq j\leq2m$, 
$$\partial^{2m}(\mathfrak{a}_{i,j}^{2m}):=\begin{cases}
\mathfrak{a}_{i,0}^{2m-1}y+x\mathfrak{a}_{i+1,0}^{2m-1} 
	& \mbox{if $i=0,2$ and $j=0$}\\
\mathfrak{a}_{i,0}^{2m-1}x+y\mathfrak{a}_{i+1,0}^{2m-1} 
	& \mbox{if $i=1,3$ and $j=0$}\\
\mathfrak{a}_{i,j-1}^{2m-1}x^{4T+1}+\mathfrak{a}_{i,j}^{2m-1}
y+x\mathfrak{a}_{i+1,j}^{2m-1}+y^{4T+1}\mathfrak{a}_{i+1,j-1}^{2m-1} 
	& \mbox{if $i=0,2$ and $1\leq j\leq m-1$}\\
\mathfrak{a}_{i,j-1}^{2m-1}y^{4T+1}+\mathfrak{a}_{i,j}^{2m-1}
x+y\mathfrak{a}_{i+1,j}^{2m-1}+x^{4T+1}\mathfrak{a}_{i+1,j-1}^{2m-1} 
	& \mbox{if $i=1,3$ and $1\leq j\leq m-1$}\\
\Big[\sum_{s=0}^{T}x^{4s}\big(\mathfrak{a}_{i,m-1}^{2m-1}x
+x\mathfrak{a}_{i+1,m}^{2m-1}\big)x^{4T-4s}\Big] 
	& \\
\quad+\Big[\sum_{s=0}^{T-1}x^{4s+2}\big(\mathfrak{a}_{i+2,m-1}^{2m-1}
x+x\mathfrak{a}_{i+3,m}^{2m-1}\big)x^{4T-4s-2}\Big] \\
\quad\quad+\Big[\sum_{s=0}^{T}y^{4s}\big(\mathfrak{a}_{i,m}^{2m-1}
y+y\mathfrak{a}_{i+1,m-1}^{2m-1}\big)y^{4T-4s}\Big]\\
\quad\quad\quad+\Big[\sum_{s=0}^{T-1}y^{4s+2}
\big(\mathfrak{a}_{i+2,m}^{2m-1}y+y\mathfrak{a}_{i+3,m-1}^{2m-1}\big)
y^{4T-4s-2}\Big]
	& \mbox{if $i=0,2$ and $j=m$}\\
\Big[\sum_{s=0}^{T}y^{4s}\big(\mathfrak{a}_{i,m-1}^{2m-1}y+y
\mathfrak{a}_{i+1,m}^{2m-1}\big)y^{4T-4s}\Big]\\
\quad+\Big[\sum_{s=0}^{T-1}y^{4s+2}\big(\mathfrak{a}_{i+2,m-1}^{2m-1}
y+y\mathfrak{a}_{i+3,m}^{2m-1}\big)y^{4T-4s-2}\Big]\\
\quad\quad+\Big[\sum_{s=0}^{T}x^{4s}\big(\mathfrak{a}_{i,m}^{2m-1}
x+x\mathfrak{a}_{i+1,m-1}^{2m-1}\big)x^{4T-4s}\Big]\\
\quad\quad\quad+\Big[\sum_{s=0}^{T-1}x^{4s+2}\big(\mathfrak{a}_{i+2,
m}^{2m-1}x+x\mathfrak{a}_{i+3,m-1}^{2m-1}\big)x^{4T-4s-2}\Big]
	&\mbox{if $i=1,3$ and $j=m$}\\
\mathfrak{a}_{i,j-1}^{2m-1}x+\mathfrak{a}_{i,j}^{2m-1}y^{4T+1}+
x^{4T+1}\mathfrak{a}_{i+1,j}^{2m-1}+y\mathfrak{a}_{i+1,j-1}^{2m-1} 
	& \mbox{if $i=0,2$ and $m+1\leq j\leq 2m-1$}\\
\mathfrak{a}_{i,j-1}^{2m-1}y+\mathfrak{a}_{i,j}^{2m-1}x^{4T+1}+
y^{4T+1}\mathfrak{a}_{i+1,j}^{2m-1}+x\mathfrak{a}_{i+1,j-1}^{2m-1} 
	& \mbox{if $i=1,3$ and $m+1\leq j\leq 2m-1$}\\
\mathfrak{a}_{i,2m-1}^{2m-1}x+y\mathfrak{a}_{i+1,2m-1}^{2m-1}
	& \mbox{if $i=0,2$ and $j=2m$}\\
\mathfrak{a}_{i,2m-1}^{2m-1}y+x\mathfrak{a}_{i+1,2m-1}^{2m-1}
	& \mbox{if $i=1,3$ and $j=2m$}. 
\end{cases}$$

\end{enumerate}
\end{defin}
\noindent
It is straightforward to check that the composite 
$\partial^{n}\partial^{n+1}$ is zero for all $n\geq0$, so that 
$(Q^{\bullet},\partial)$ is a complex of 
$A_{T}$-$A_{T}$-bimodules.

\begin{remark}\label{resolution_A_T}
For $n\geq0$, the map $G^{n}:A_{T}/\mathfrak{r}_{A_{T}}\otimes_{A_{T}}
Q^{n}\rightarrow P^{n}$ determined by $G^{n}\big(\mathfrak{o}
(g_{i,j}^{n})\otimes_{A_{T}}\mathfrak{a}_{i,j}^{n}\big)=\mathfrak{t}
(g_{i,j}^{n})$ ($0\leq i\leq3$; $0\leq j\leq n$) is an isomorphism 
of right $A_{T}$-modules, and this map makes the following 
diagram commutative: 
$$\begin{CD}
A_{T}/\mathfrak{r}_{A_{T}}\otimes_{A_{T}}Q^{n+1}&@>A_{T}/
\mathfrak{r}_{A_{T}}\otimes_{A_{T}}\partial^{n+1}>> & 
A_{T}/\mathfrak{r}_{A_{T}}\otimes_{A_{T}}Q^{n} \\
@VG^{n+1}V{\simeq}V&& @V{\simeq}VG^{n}V \\
P^{n+1} & @>d^{n+1}>>&P^{n}
\end{CD}$$
This shows that $(A_{T}/\mathfrak{r}_{A_{T}}\otimes_{A_{T}}
Q^{\bullet},A_{T}/\mathfrak{r}_{A_{T}}\otimes_{A_{T}}\partial)$ 
is isomorphic to $(P^{\bullet},d)$ as complexes and hence is 
a minimal projective resolution of $A_{T}/\mathfrak{r}_{A_{T}}
\otimes_{A_{T}}A_{T}$ ($\simeq A_{T}/\mathfrak{r}_{A_{T}}$). 
\end{remark}

\noindent
Now we have the following theorem. The proof is done with 
Remark~\ref{resolution_A_T} and by following 
\cite{GS} (and see also \cite{ST}), so we omit it.

\begin{thm}
The complex $(Q^{\bullet},\partial)$ is a minimal projective 
bimodule resolution of $A_{T}$. 
\end{thm}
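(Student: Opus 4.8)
The plan is to verify the three defining properties of a minimal projective bimodule resolution: that $(Q^\bullet,\partial)$ is a complex (already noted in the excerpt), that each $Q^n$ is projective as an $A_T$-$A_T$-bimodule (immediate, since each $Q^n$ is a direct sum of modules of the form $A_T\mathfrak{o}(g)\otimes\mathfrak{t}(g)A_T$), that the complex is acyclic with $\operatorname{coker}\partial^1\cong A_T$, and that the resolution is minimal. The key tool for acyclicity and minimality is Remark~\ref{resolution_A_T}, which I would exploit in the manner of \cite{GS}.

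First I would establish exactness together with minimality in one stroke using the tensoring argument. The strategy is to apply the functor $A_T/\mathfrak{r}_{A_T}\otimes_{A_T}(-)$ to the complex $(Q^\bullet,\partial)$ and invoke Remark~\ref{resolution_A_T}: this identifies the resulting complex with the minimal projective resolution $(P^\bullet,d)$ of the simple right module $A_T/\mathfrak{r}_{A_T}$. Since $(P^\bullet,d)$ is \emph{minimal}, every entry of each differential $d^n$ lies in $\mathfrak{r}_{A_T}$; transporting this back through the isomorphisms $G^n$ shows that each matrix entry of $\partial^n$ lies in $\mathfrak{r}_{A_T^{\rm e}}=\mathfrak{r}_{A_T^{\rm op}\otimes A_T}$, which is exactly the statement that $(Q^\bullet,\partial)$ is a minimal complex of projectives.

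Next I would prove acyclicity. The clean way is a rank/Euler-characteristic bookkeeping argument combined with the right-exactness already available. Concretely, one knows that $(Q^\bullet,\partial)$ is a complex of projective bimodules whose reduction modulo $\mathfrak{r}_{A_T}$ on the left is the exact resolution $(P^\bullet,d)$ augmented to $A_T/\mathfrak{r}_{A_T}$. A standard lifting criterion (used in \cite{GS}) says that if a complex of projective $A_T^{\rm e}$-modules becomes a projective resolution of the simple module after applying $A_T/\mathfrak{r}_{A_T}\otimes_{A_T}(-)$, and if the augmentation $\partial^0\colon Q^0\to A_T$ is surjective with the sequence exact at $Q^0$ and $Q^1$, then the whole complex is a resolution of $A_T$. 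I would therefore check exactness at the low degrees $Q^0$ and $Q^1$ by hand---$\partial^0$ is the multiplication map, so its cokernel is $A_T$, and the image of $\partial^1$ is generated by the commutators $\mathfrak{a}^0_{i,0}c-c\,\mathfrak{a}^0_{i+1,0}$, identifying $\ker\partial^0/\operatorname{im}\partial^1$ as expected---and then let the minimality-plus-reduction argument propagate exactness upward.

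The main obstacle is the lifting/acyclicity step in higher degrees, because the explicit differential $\partial^{2m}$ in the middle case $j=m$ is genuinely complicated, involving the $T$-dependent sums $\sum_s x^{4s}(\cdots)x^{4T-4s}$ and their $y$-analogues. Verifying acyclicity by a direct construction of a contracting homotopy would force one to confront these sums head-on. The cleaner route---and the one I would pursue, following \cite{GS,ST}---is to avoid explicit homotopies entirely: since tensoring with $A_T/\mathfrak{r}_{A_T}$ yields the minimal resolution $(P^\bullet,d)$ by Remark~\ref{resolution_A_T}, the complex $(Q^\bullet,\partial)$ has the correct homology by the general comparison theorem for such lifted resolutions, and minimality is automatic. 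This is precisely why the author can legitimately cite \cite{GS} and omit the proof: the heavy combinatorial identities have already been packaged into the commutativity of the diagram in Remark~\ref{resolution_A_T}, and the remaining argument is formal.
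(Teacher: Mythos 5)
Your proposal is correct and follows essentially the same route as the paper, which omits the proof precisely because it reduces to Remark~\ref{resolution_A_T} together with the lifting criterion of \cite{GS} (see also \cite{ST}): tensoring with $A_{T}/\mathfrak{r}_{A_{T}}$ recovers the minimal resolution $(P^{\bullet},d)$, exactness in low degrees is checked directly, and minimality and acyclicity then follow formally. No substantive difference from the intended argument.
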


\section{Hochschild cohomology groups of $A_{T}$}
\label{section_group}
In this section we find an explicit $K$-basis of the 
Hochschild cohomology group ${\rm HH}^{i}(A_{T})$ $(i\geq0)$ by 
using the resolution $(Q^{\bullet},\partial)$ in 
Section~\ref{section_resolution}, and then give the 
dimension of ${\rm HH}^{i}(A_{T})$, completely. Throughout this 
section we keep the notation from Section~\ref{section_resolution}.

By applying the functor ${\rm Hom}_{A_{T}^{\rm e}}(-,A_{T})$ to 
$(Q^{\bullet},\partial)$, we have the complex 
$$\begin{CD}
0 \longrightarrow\ 	& {\rm Hom}_{A_{T}^{\rm e}}(Q^{0},A_{T}) 
	& @>{\rm Hom}_{A_{T}^{\rm e}}(\partial^{1},A_{T})>> 
	& {\rm Hom}_{A_{T}^{\rm e}}(Q^{1},A_{T})
	& @>{\rm Hom}_{A_{T}^{\rm e}}(\partial^{2},A_{T})>> 
	&		& \bigskip\\ 
	& {\rm Hom}_{A_{T}^{\rm e}}(Q^{2},A_{T}) 
	& @>{\rm Hom}_{A_{T}^{\rm e}}(\partial^3,A_{T})>> 
	& {\rm Hom}_{A_{T}^{\rm e}}(Q^{3},A_{T})
	& @>{\rm Hom}_{A_{T}^{\rm e}}(\partial^{4},A_{T})>> 
	& \cdots. \end{CD}$$
Recall that, for $n\geq0$, the $n$th Hochschild cohomology 
group ${\rm HH}^{n}(A_{T})$ of $A_{T}$ is defined to be the 
$K$-space ${\rm HH}^{n}(A_{T}):={\rm Ext}^{n}_{A_{T}^{\rm e}}(A_{T}, 
A_{T})={\rm Ker}\,{\rm Hom}_{A_{T}^{\rm e}}(\partial^{n+1},A_{T})
/{\rm Im}\,{\rm Hom}_{A_{T}^{\rm e}}(\partial^{n},A_{T})$.

\subsection{A basis of ${\rm Hom}_{A_{T}^{\rm e}}(Q^{i},A_{T})$}
We start with the following remark: 
\begin{remark}\label{rembimodule}
\begin{enumerate}[\rm(a)]
\item For integers $n\geq0$, $0\leq i\leq3$ and $0\leq j\leq n$, if 
$n\equiv t$ (${\rm mod}\,4$), then, by Remark~\ref{rem1}~(a), we 
get $\mathfrak{o}(g_{i,j}^{n})A_{T}\mathfrak{t}(g_{i,j}^{n})=
e_{i}A_{T}e_{i+t}$. Hence $\mathfrak{o}(g_{i,j}^{n})A_{T}\mathfrak{t}
(g_{i,j}^{n})$ has a $K$-basis 
$$\begin{cases}
\{e_{i}\} 	& \mbox{if $T=0$ and $n\equiv0$ (${\rm mod}\,4$)}\\ 
\{e_{i}x^{4l},\:e_{i}y^{4u}\mid0\leq l\leq T;\ 1\leq u\leq T\} 
		& \mbox{if $T\geq1$ and $n\equiv0$ (${\rm mod}\,4$)}\\ 
\{e_{i}x^{4l+1},\:e_{i}y^{4l+1}\mid0\leq l\leq T\} 
		& \mbox{if $n\equiv1$ (${\rm mod}\,4$)}\\
\{e_{i}x^{2}\}
		& \mbox{if $T=0$ and $n\equiv2$ (${\rm mod}\,4$)}\\
\{e_{i}x^{4l+2},\:e_{i}y^{4u+2}\mid0\leq l\leq T;\ 0\leq u\leq T-1\} 
		& \mbox{if $T\geq1$ and $n\equiv2$ (${\rm mod}\,4$)}\\
\{e_{i}x^{4l+3},\:e_{i}y^{4l+3}\mid0\leq l\leq T-1\} 
		& \mbox{if $T\geq1$ and $n\equiv3$ (${\rm mod}\,4$).}
\end{cases}$$
Moreover, $\mathfrak{o}(g_{i,j}^{n})A_{T}\mathfrak{t}(g_{i,j}^{n})
=\{0\}$, if $T=0$ and $n\equiv3$ (${\rm mod}\,4$).

\item For $n\geq0$ the map $F:\bigoplus_{g\in\mathcal{G}^{n}}
\mathfrak{o}(g)A_{T}\mathfrak{t}(g)\rightarrow
{\rm Hom}_{A_{T}^{\rm e}}(Q^{n},A_{T})$ given by 
$(F(\sum_{g\in\mathcal{G}^{n}}z_{g}))(\mathfrak{a}_{i,j}^{n})=
z_{g_{i,j}^{n}}$, where $z_{g}\in\mathfrak{o}(g)A_{T}\mathfrak{t}
(g)$ for $g\in \mathcal{G}^{n}$, $0\leq i\leq3$ and $0\leq j
\leq n$, is an isomorphism of $K$-spaces. 
\end{enumerate}
\end{remark}
\noindent 
We need the following maps. 
\begin{defin} Let $n\geq0$ be an integer. For $0\leq i
\leq3$, $0\leq j\leq n$ and $\begin{cases}0\leq l\leq T&
\mbox{if $n\not\equiv3$ (${\rm mod}\,4$)}\\ 0\leq l\leq T-1&
\mbox{if $n\equiv3$ (${\rm mod}\,4$),}\end{cases}$ we define 
the maps $\beta_{i,j}^{n,l}$, $\gamma_{i,j}^{n,l}$: $Q^{n}
\rightarrow A_{T}$ to be the homomorphisms of 
$A_{T}$-$A_{T}$-bimodules determined by 
$$\beta_{i,j}^{n,l}(\mathfrak{a}_{r,s}^{n}):=\begin{cases}
e_{i}x^{4l+t}	& \mbox{if $r=i$, $s=j$ and $n\equiv t$ 
(${\rm mod}\,4$) where $0\leq t\leq3$}\\ 
0		& \mbox{otherwise}
\end{cases}$$ 
and 
$$\gamma_{i,j}^{n,l}(\mathfrak{a}_{r,s}^{n}):=\begin{cases}
e_{i}y^{4l+t}	& \mbox{if $r=i$, $s=j$ and $n\equiv t$ 
(${\rm mod}\,4$) where $0\leq t\leq3$}\\ 
0		& \mbox{otherwise}
\end{cases}$$ 
for $0\leq r\leq3$ and $0\leq s\leq n$, respectively. 
\end{defin}
\noindent 
Note that, for $t\geq0$ and $0\leq i\leq3$, we get $\beta_{i,
j}^{4t,0}=\gamma_{i,j}^{4t,0}$ for $0\leq j\leq4t$ and $\beta_{i,
j}^{4t+2,T}=-\gamma_{i,j}^{4t+2,T}$ for $0\leq j\leq4t+2$.

Then, by Remark~\ref{rembimodule}, we immediately have a 
$K$-basis of ${\rm Hom}_{A_{T}^{\rm e}}(Q^{n},A_{T})$:  
\begin{lem}\label{base_hom}
Let $n\geq0$ be an integer. Then 
$$\begin{cases}
\{\beta_{i,j}^{n,0}\mid0\leq i\leq3;\ 0\leq j\leq n\}
	&\mbox{if $n\equiv0$ $({\rm mod}\,4)$ and $T=0$}\\
\{\beta_{i,j}^{n,4l},\ \gamma_{i,j}^{n,4u}\mid0\leq i\leq3;\ 
0\leq j\leq n;\ 0\leq l\leq T;\ 1\leq u\leq T\}
	&\mbox{if $n\equiv0$ $({\rm mod}\,4)$ and $T\geq1$}\\
\{\beta_{i,j}^{n,4l+1},\ \gamma_{i,j}^{n,4l+1}\mid0\leq i\leq
3;\ 0\leq j\leq n;\ 0\leq l\leq T\}
	&\mbox{if $n\equiv1$ $({\rm mod}\,4)$}\\
\{\beta_{i,j}^{n,2}\mid0\leq i\leq3;\ 0\leq j\leq n\}
	&\mbox{if $n\equiv2$ $({\rm mod}\,4)$ and $T=0$}\\
\{\beta_{i,j}^{n,4l+2},\ \gamma_{i,j}^{n,4u+2}\mid0\leq i\leq
3;\ 0\leq j\leq n;\ 0\leq l\leq T;\ 0\leq u\leq T-1\}	
	&\mbox{if $n\equiv2$ $({\rm mod}\,4)$ and $T\geq1$}\\
\{\beta_{i,j}^{n,4l+3},\ \gamma_{i,j}^{n,4l+3}\mid0\leq i\leq
3;\ 0\leq j\leq n;\ 0\leq l\leq T-1\}
	&\mbox{if $n\equiv3$ $({\rm mod}\,4)$ and $T\geq1$}
\end{cases}$$
gives a $K$-basis of ${\rm Hom}_{A^{\rm e}_{T}}(Q^{n},A_{T})$. 
Moreover, ${\rm Hom}_{A_{T}^{\rm e}}(Q^{n},A_{T})=\{0\}$, if 
$n\equiv3$ $({\rm mod}\,4)$ and $T=0$. 
\end{lem}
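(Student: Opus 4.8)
The plan is to deduce the lemma almost formally from the two halves of Remark~\ref{rembimodule}. The key object is the $K$-linear isomorphism
$$F\colon\bigoplus_{g\in\mathcal{G}^{n}}\mathfrak{o}(g)A_{T}\mathfrak{t}(g)\ \xrightarrow{\ \sim\ }\ {\rm Hom}_{A_{T}^{\rm e}}(Q^{n},A_{T})$$
supplied by Remark~\ref{rembimodule}(b). Since $F$ is an isomorphism of $K$-spaces, it carries any $K$-basis of its source to a $K$-basis of ${\rm Hom}_{A_{T}^{\rm e}}(Q^{n},A_{T})$, so the whole problem reduces to writing down a basis of the source and translating it through $F$. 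The source is a direct sum over $g=g_{i,j}^{n}\in\mathcal{G}^{n}$, so a basis of it is obtained by taking, for each pair $(i,j)$, a basis of the single summand $\mathfrak{o}(g_{i,j}^{n})A_{T}\mathfrak{t}(g_{i,j}^{n})=e_{i}A_{T}e_{i+t}$ (where $n\equiv t\ ({\rm mod}\,4)$) and regarding each such basis vector as a tuple supported in its slot with zeros elsewhere. These summand bases are exactly what Remark~\ref{rembimodule}(a) records, case by case in $t$ and in whether $T=0$ or $T\geq1$.

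The second step is to match the $F$-images of these monomial basis vectors with the maps $\beta_{i,j}^{n,l}$ and $\gamma_{i,j}^{n,l}$. Reading off the definition of $F$, the tuple supported at $g_{i,j}^{n}$ with value $e_{i}x^{4l+t}$ maps to the homomorphism sending $\mathfrak{a}_{i,j}^{n}\mapsto e_{i}x^{4l+t}$ and annihilating every other generator, which is precisely $\beta_{i,j}^{n,l}$; likewise the tuple with value $e_{i}y^{4l+t}$ maps to $\gamma_{i,j}^{n,l}$. Thus I would simply run through the four residues of $n$ modulo~$4$, translate each monomial in the basis of Remark~\ref{rembimodule}(a) into the corresponding $\beta$ or $\gamma$, and read off the displayed bases. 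The vanishing clause for $n\equiv3\ ({\rm mod}\,4)$ with $T=0$ is immediate from the last sentence of Remark~\ref{rembimodule}(a): there every summand $\mathfrak{o}(g)A_{T}\mathfrak{t}(g)$ is zero, so the source of $F$, and hence the Hom-space, vanishes.

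The one place requiring genuine care --- and where a careless count would produce redundancy --- is the two coincidences recorded just before the lemma, $\beta_{i,j}^{4t,0}=\gamma_{i,j}^{4t,0}$ and $\beta_{i,j}^{4t+2,T}=-\gamma_{i,j}^{4t+2,T}$. These reflect the fact that both $\beta_{i,j}^{n,0}$ and $\gamma_{i,j}^{n,0}$ send $\mathfrak{a}_{i,j}^{n}$ to $e_{i}$ when $n\equiv0$, and that $x^{4T+2}=-y^{4T+2}$ in $A_{T}$ when $n\equiv2$; they are exactly the reason the monomial basis in Remark~\ref{rembimodule}(a) drops $e_{i}y^{0}$ in the degree-$0$ case and drops $e_{i}y^{4T+2}$ in the degree-$2$ case. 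Correspondingly, the index ranges in the lemma's lists --- $\gamma$ taken only for $1\leq u\leq T$ when $n\equiv0$, and only for $0\leq u\leq T-1$ when $n\equiv2$ --- are precisely those that avoid this overlap. Checking that these ranges match the omissions in Remark~\ref{rembimodule}(a) is the heart of the verification; everything else is bookkeeping downstream of the isomorphism $F$, so I expect no substantial obstacle.
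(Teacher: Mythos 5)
Your proposal is correct and follows exactly the route the paper intends: the paper offers no written proof beyond the phrase ``by Remark~\ref{rembimodule}, we immediately have a $K$-basis,'' and your argument is precisely the elaboration of that deduction --- transport the summand-wise monomial bases of Remark~\ref{rembimodule}(a) through the isomorphism $F$ of part (b), identify the images with the maps $\beta_{i,j}^{n,l}$ and $\gamma_{i,j}^{n,l}$, and note that the restricted index ranges for $\gamma$ account for the coincidences $\beta_{i,j}^{4t,0}=\gamma_{i,j}^{4t,0}$ and $\beta_{i,j}^{4t+2,T}=-\gamma_{i,j}^{4t+2,T}$. No gaps.
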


\noindent
In the rest of the paper, we consider the subscripts $i$ of 
all maps $\beta_{i,j}^{n,l}$ and $\gamma_{i,j}^{n,l}$ as 
modulo $4$.

\subsection{Maps ${\rm Hom}_{A_{T}^{\rm e}}(\partial^{n},A_{T})$}
Now, by direct computations, we have the images of the basis 
elements in Lemma~\ref{base_hom} under the map ${\rm Hom}_{A_{T}^{
\rm e}}(\partial^{n}, A_{T})$:

\begin{lem}\label{image_maps} 
For $m\geq0$, we have the following{\rm:} 
\begin{enumerate}[\rm(a)]
\item For $0\leq i\leq3$ and $0\leq j\leq 4m+1$, 
$$\beta_{i,j}^{4m+1,0}\partial^{4m+2}=\begin{cases}
\beta^{4m+2,T}_{i,j+1}+\beta^{4m+2,T}_{i-1,j+1}
	&\mbox{if $i=0,2$ and $0\leq j\leq2m-1$} \\
\beta^{4m+2,0}_{i,j}+\beta^{4m+2,0}_{i-1,j}
	&\mbox{if $i=1,3$ and $0\leq j\leq2m$}\\
(T+1)\beta^{4m+2,T}_{i,2m+1}+T\beta^{4m+2,T}_{i+1,2m+1} 
	& \\
\hspace{5mm}+T\beta^{4m+2,T}_{i+2,2m+1}+(T+1)\beta^{4m+2,
T}_{i+3,2m+1}
	&\mbox{if $i=0,2$ and $j=2m$,} \\
	& \quad\quad \mbox{or if $i=1,3$ and $j=2m+1$}\\
\beta^{4m+2,0}_{i,j+1}+\beta^{4m+2,0}_{i-1,j+1}
	&\mbox{if $i=0,2$ and $2m+1\leq j\leq4m+1$}\\
\beta^{4m+2,T}_{i,j}+\beta^{4m+2,T}_{i-1,j}
	&\mbox{if $i=1,3$ and $2m+2\leq j\leq4m+1$}
\end{cases}$$
and 
$$\gamma_{i,j}^{4m+1,0}\partial^{4m+2}=\begin{cases}
\gamma^{4m+2,0}_{i,j}+\gamma^{4m+2,0}_{i-1,j}
	&\mbox{if $i=0,2$ and $0\leq j\leq2m$}\\
\gamma^{4m+2,T}_{i,j+1}+\gamma^{4m+2,T}_{i-1,j+1}
	&\mbox{if $i=1,3$ and $0\leq j\leq2m-1$}\\
(T+1)\gamma^{4m+2,T}_{i,2m+1}+T\gamma^{4m+2,T}_{i+1,2m+1}
	&\\
\hspace{5mm}+T\gamma^{4m+2,T}_{i+2,2m+1}+(T+1)\gamma^{4m+2,
T}_{i+3,2m+1}
	&\mbox{if $i=0,2$ and $j=2m+1$,}\\
	&\quad\quad\mbox{or if $i=1,3$ and $j=2m$}\\
\gamma^{4m+2,T}_{i,j}+\gamma^{4m+2,T}_{i-1,j}
	&\mbox{if $i=0,2$ and $2m+2\leq j\leq4m+1$}\\
\gamma^{4m+2,0}_{i,j+1}+\gamma^{4m+2,0}_{i-1,j+1}
	&\mbox{if $i=1,3$ and $2m+1\leq j\leq4m+1$}. 
\end{cases}$$
Moreover, for $1\leq l\leq T$, $0\leq i\leq3$ and $0\leq 
j\leq 4m+1$, 
$$\beta_{i,j}^{4m+1,l}\partial^{4m+2}=\begin{cases}
0 	&\mbox{if $i=0,2$ and $0\leq j\leq2m$}\\ 
\beta^{4m+2,l}_{i,j}+\beta^{4m+2,l}_{i-1,j}
	&\mbox{if $i=1,3$ and $0\leq j\leq2m$}\\ 
\beta^{4m+2,l}_{i,j+1}+\beta^{4m+2,l}_{i-1,j+1}
	&\mbox{if $i=0,2$ and $2m+1\leq j\leq4m+1$}\\ 
0 	&\mbox{if $i=1,3$ and $2m+1\leq j\leq4m+1$}
\end{cases}$$
and 
$$\gamma_{i,j}^{4m+1,l}\partial^{4m+2}=\begin{cases}
\gamma^{4m+2,l}_{i,j}+\gamma^{4m+2,l}_{i-1,j}
	&\mbox{if $i=0,2$ and $0\leq j\leq2m$}\\ 
0	&\mbox{if $i=1,3$ and $0\leq j\leq2m$}\\ 
0	&\mbox{if $i=0,2$ and $2m+1\leq j\leq4m+1$}\\ 
\gamma^{4m+2,l}_{i,j+1}+\gamma^{4m+2,l}_{i-1,j+1}
	&\mbox{if $i=1,3$ and $2m+1\leq j\leq4m+1$.}
\end{cases}$$

\item For $0\leq l\leq T-1$ (so $T\geq1$), $0\leq i\leq3$ 
and $0\leq j\leq4m+2$, 
$$\beta^{4m+2,l}_{i,j}\partial^{4m+3}=
\begin{cases}
\beta^{4m+3,l}_{i,j}	&\mbox{if $i=0,2$ and $0\leq j\leq2m$}\\
-\beta^{4m+3,l}_{i-1,j}	&\mbox{if $i=1,3$ and $0\leq j\leq2m$}\\
\beta^{4m+3,l}_{i,2m+1}-\beta^{4m+3,l}_{i-1,2m+2}
			&\mbox{if $i=0,2$ and $j=2m+1$}\\
\beta^{4m+3,l}_{i,2m+2}-\beta^{4m+3,l}_{i-1,2m+1}
			&\mbox{if $i=1,3$ and $j=2m+1$}\\
-\beta^{4m+3,l}_{i-1,j+1}
			&\mbox{if $i=0,2$ and $2m+2\leq j\leq4m+2$}\\
\beta^{4m+3,l}_{i,j+1}	&\mbox{if $i=1,3$ and $2m+2\leq j\leq4m+2$}
\end{cases}$$
and 
$$\gamma^{4m+2,l}_{i,j}\partial^{4m+3}=
\begin{cases}
-\gamma^{4m+3,l}_{i-1,j}
		&\mbox{if $i=0,2$ and $0\leq j\leq2m$}\\
\gamma^{4m+3,l}_{i,j}
		&\mbox{if $i=1,3$ and $0\leq j\leq2m$}\\
\gamma^{4m+3,l}_{i,2m+2}-\gamma^{4m+3,l}_{i-1,2m+1}
		&\mbox{if $i=0,2$ and $j=2m+1$}\\
\gamma^{4m+3,l}_{i,2m+1}-\gamma^{4m+3,l}_{i-1,2m+2}
		&\mbox{if $i=1,3$ and $j=2m+1$}\\
\gamma^{4m+3,l}_{i,j+1}	
		&\mbox{if $i=0,2$ and $2m+2\leq j\leq4m+2$}\\
-\gamma^{4m+3,l}_{i-1,j+1}
		&\mbox{if $i=1,3$ and $2m+2\leq j\leq4m+2$.}
\end{cases}$$
Moreover, for $0\leq i\leq3$ and $0\leq j\leq 4m+2$, 
$\beta^{4m+2,T}_{i,j}\partial^{4m+3}=\gamma^{4m+2,T}_{i,j}
\partial^{4m+3}=0$. 
\item For $0\leq l\leq T-1$ (so $T\geq1$), $0\leq i\leq3$ 
and $0\leq j\leq4m+3$, 
$$\beta^{4m+3,l}_{i,j}\partial^{4m+4}=\begin{cases}
0	&\mbox{if $i=0,2$ and 
$0\leq j\leq 2m+1$}\\
\beta^{4m+4,l+1}_{i,j} + \beta^{4m+4,l+1}_{i-1,j} 
	&\mbox{if $i=1,3$ and $0\leq j\leq 2m+1$}\\
\beta^{4m+4,l+1}_{i,j+1} + \beta^{4m+4,l+1}_{i-1,j+1} 
	&\mbox{if $i=0,2$ and $2m+2\leq j\leq 4m+3$}\\
0	&\mbox{if $i=1,3$ and $2m+2\leq j\leq 4m+3$}
\end{cases}$$
and 
$$\gamma^{4m+3,l}_{i,j}\partial^{4m+4}=\begin{cases}
0	&\mbox{if $i=0,2$ and 
$0\leq j\leq 2m+1$}\\
\gamma^{4m+4,l+1}_{i,j} + \gamma^{4m+4,l+1}_{i-1,j} 
	&\mbox{if $i=1,3$ and $0\leq j\leq 2m+1$}\\
\gamma^{4m+4,l+1}_{i,j+1} + \gamma^{4m+4,l+1}_{i-1,j+1} 
	&\mbox{if $i=0,2$ and $2m+2\leq j\leq 4m+3$}\\
0	&\mbox{if $i=1,3$ and $2m+2\leq j\leq 4m+3$.}
\end{cases}$$
\item For $0\leq l\leq T$, $0\leq i\leq3$ and $0\leq j\leq4m$, 
$$\beta^{4m,0}_{i,j}\partial^{4m+1}=\gamma^{4m,0}_{i,j}
\partial^{4m+1}=\begin{cases}
\beta^{4m+1,0}_{i,j}-\gamma^{4m+1,0}_{i-1,j}+\gamma^{4m+1,T}_{i,j+1}
-\beta^{4m+1,T}_{i-1,j+1} 	& \mbox{if $i=0,2$ and $0\leq 
j\leq2m-1$}\\ 
\gamma^{4m+1,0}_{i,j}-\beta^{4m+1,0}_{i-1,j}+\beta^{4m+1,T}_{i,j+1}
-\gamma^{4m+1,T}_{i-1,j+1} 	& \mbox{if $i=1,3$ and $0\leq 
j\leq2m-1$}\\
\beta^{4m+1,0}_{i,2m}-\gamma^{4m+1,0}_{i-1,2m}+
\gamma^{4m+1,0}_{i,2m+1}-\beta^{4m+1,0}_{i-1,2m+1} 
		& \mbox{if $i=0,2$ and $j=2m$}\\
\gamma^{4m+1,0}_{i,2m}-\beta^{4m+1,0}_{i-1,2m}+
\beta^{4m+1,0}_{i,2m+1}-\gamma^{4m+1,0}_{i-1,2m+1} 
		& \mbox{if $i=1,3$ and $j=2m$}\\
\beta^{4m+1,T}_{i,j}-\gamma^{4m+1,T}_{i-1,j}+
\gamma^{4m+1,0}_{i,j+1}-\beta^{4m+1,0}_{i-1,j+1} 
		& \mbox{if $i=0,2$ and $2m+1\leq j\leq4m$}\\
\gamma^{4m+1,T}_{i,j}-\beta^{4m+1,T}_{i-1,j}+
\beta^{4m+1,0}_{i,j+1}-\gamma^{4m+1,0}_{i-1,j+1} 
		& \mbox{if $i=1,3$ and $2m+1\leq j\leq4m$,}
\end{cases}$$
and also, for $1\leq l\leq T$, $0\leq i\leq3$ and $0\leq j
\leq4m$,
$$\beta^{4m,l}_{i,j}\partial^{4m+1}=
\begin{cases}
\beta^{4m+1,l}_{i,j}	& \mbox{if $i=0,2$ and $0\leq j\leq 
2m-1$}\\
-\beta^{4m+1,l}_{i-1,j}	& \mbox{if $i=1,3$ and $0\leq j\leq 
2m-1$}\\
\beta^{4m+1,l}_{i,2m}-\beta^{4m+1,l}_{i-1,2m+1}
			& \mbox{if $i=0,2$ and $j=2m$}\\
\beta^{4m+1,l}_{i,2m+1}-\beta^{4m+1,l}_{i-1,2m}
			& \mbox{if $i=1,3$ and $j=2m$}\\
-\beta^{4m+1,l}_{i-1,j+1}& \mbox{if $i=0,2$ and $2m+1\leq j\leq 
4m$}\\
\beta^{4m+1,l}_{i,j+1}	& \mbox{if $i=1,3$ and $2m+1\leq j\leq 
4m$}
\end{cases}$$
and 
$$\gamma^{4m,l}_{i,j}\partial^{4m+1}=
\begin{cases}
-\gamma^{4m+1,l}_{i-1,j}
			& \mbox{if $i=0,2$ and $0\leq j\leq
2m-1$}\\
\gamma^{4m+1,l}_{i,j}	& \mbox{if $i=1,3$ and $0\leq j\leq
2m-1$}\\
\gamma^{4m+1,l}_{i,2m+1}-\gamma^{4m+1,l}_{i-1,2m}
			& \mbox{if $i=0,2$ and $j=2m$}\\
\gamma^{4m+1,l}_{i,2m}-\gamma^{4m+1,l}_{i-1,2m+1}
			& \mbox{if $i=1,3$ and $j=2m$}\\
\gamma^{4m+1,l}_{i,j+1}	
			& \mbox{if $i=0,2$ and $2m+1\leq j
\leq4m$}\\
-\gamma^{4m+1,l}_{i-1,j+1}	
			& \mbox{if $i=1,3$ and $2m+1\leq j
\leq4m$.}
\end{cases}$$
\end{enumerate}
\end{lem}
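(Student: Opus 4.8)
The plan is to establish every identity in the lemma by a direct, case-by-case computation, organized according to $n\equiv 0,1,2,3\pmod 4$, which correspond respectively to parts (d), (a), (b) and (c). The key structural observation is that each $\beta_{i,j}^{n,l}$ and $\gamma_{i,j}^{n,l}$ is a homomorphism of $A_T$-$A_T$-bimodules, so the composite $\beta_{i,j}^{n,l}\partial^{n+1}$ appearing in the statement, which is precisely the image of $\beta_{i,j}^{n,l}$ under $\mathrm{Hom}_{A_T^{\mathrm e}}(\partial^{n+1},A_T)$, is again a bimodule map and is therefore completely determined by its values on the free generators $\mathfrak{a}_{r,s}^{n+1}$. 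Thus for each fixed $(i,j,l)$ I would evaluate the composite on every $\mathfrak{a}_{r,s}^{n+1}$, substituting the explicit formula for $\partial^{n+1}(\mathfrak{a}_{r,s}^{n+1})$ from its definition, and then read off the answer in the basis of $\mathrm{Hom}_{A_T^{\mathrm e}}(Q^{n+1},A_T)$ supplied by Lemma~\ref{base_hom}.

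The mechanical core is the same in all four cases. Applying $\partial^{n+1}$ to a generator produces an $A_T$-linear combination of terms $u\,\mathfrak{a}_{p,q}^{n}\,v$, and since $\beta_{i,j}^{n,l}(u\,\mathfrak{a}_{p,q}^{n}\,v)=u\,\beta_{i,j}^{n,l}(\mathfrak{a}_{p,q}^{n})\,v$ vanishes unless $(p,q)=(i,j)$, only the summands involving $\mathfrak{a}_{i,j}^{n}$ contribute; each such summand becomes $u\,(e_i x^{4l+t})\,v$ (respectively $u\,(e_i y^{4l+t})\,v$), where $n\equiv t\pmod4$. Evaluating these monomials in $A_T$ uses nothing beyond the defining relations $xy=yx=0$ and $x^{4T+2}=-y^{4T+2}$, together with their consequences $x^{a}y^{b}=y^{a}x^{b}=0$ for $a,b\geq1$ and $x^{4T+3}=y^{4T+3}=0$; matching the result against the basis of $e_iA_Te_{i+t}$ in Remark~\ref{rembimodule}(a) identifies it as a degree-$(n+1)$ map $\beta$ or $\gamma$. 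Because a fixed lower generator $\mathfrak{a}_{i,j}^{n}$ occurs in $\partial^{n+1}(\mathfrak{a}_{r,s}^{n+1})$ both for $r=i$ and for $r=i-1$ (as the $\mathfrak{a}_{i,\cdot}$ and the $\mathfrak{a}_{(i-1)+1,\cdot}$ occurrences), the answer typically splits into the two-term pattern $\beta_{i,\cdot}^{n+1}+\beta_{i-1,\cdot}^{n+1}$ visible throughout the statement; the relations also force several collapses, for example $\beta_{i,j}^{4m+2,T}\partial^{4m+3}=\gamma_{i,j}^{4m+2,T}\partial^{4m+3}=0$, since $e_ix^{4T+2}$ is annihilated by left and right multiplication by every arrow.

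I expect the main obstacle to be the two cases that pass through an even differential, namely part (a) (via $\partial^{4m+2}$) and part (c) (via $\partial^{4m+4}$), because the definition of $\partial^{2m'}$ contains the long ``diagonal'' component at $j=m'$, of the form $\sum_{s=0}^{T}x^{4s}(\cdots)x^{4T-4s}+\cdots$ together with the analogous $y$-sums. The delicate point is the exponent bookkeeping: inserting $e_ix^{4l+t}$ into such a conjugated monomial raises the exponent by exactly $4T+1$, turning the base power $x^{4l+t}$ into $x^{4l+t+4T+1}$, and every summand of $\sum_{s=0}^{T}$ yields the \emph{same} element for all $s$. In part (a), where $t=1$, this equals the basis element $x^{4T+2}$ precisely when $l=0$ and exceeds $4T+2$ (hence vanishes) for $l\geq1$, so counting the $T+1$ terms of $\sum_{s=0}^{T}$ and the $T$ terms of $\sum_{s=0}^{T-1}$ is exactly what produces the coefficients $T+1$ and $T$ in the $j=2m,2m+1$ rows. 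In part (c), where $t=3$, the exponent becomes $4(l+T+1)>4T+2$ for every $l\geq0$, so the diagonal sum contributes nothing and the nonzero entries come entirely from the short single-arrow terms of the adjacent non-diagonal cases, which shift the superscript from $l$ to $l+1$. Throughout, keeping track of the signs, of the parity-dependent interchange of $x$ and $y$, and of the $i,\,i-1$ splitting across all subcases is where the care is required; as a global safeguard I would check that the tabulated images are consistent with the already-established identity $\partial^{n}\partial^{n+1}=0$, which detects any sign or indexing slip.
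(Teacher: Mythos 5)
Your proposal is correct and matches the paper's (omitted) argument: the paper states that Lemma~\ref{image_maps} "follows from easy computations," meaning exactly the direct evaluation of each composite $\beta_{i,j}^{n,l}\partial^{n+1}$ and $\gamma_{i,j}^{n,l}\partial^{n+1}$ on the generators $\mathfrak{a}_{r,s}^{n+1}$ that you describe. Your accounting of the delicate points — the bimodule-linearity reduction to the summands containing $\mathfrak{a}_{i,j}^{n}$, the origin of the coefficients $T+1$ and $T$ from the lengths of the conjugated sums in the even differentials, and the vanishing of the top-degree maps via $x^{4T+3}=y^{4T+3}=xy=yx=0$ — is accurate.
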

\noindent
The proof of this lemma follows from easy computations, and 
so we omit it.

\subsection{A basis of ${\rm Im}\,{\rm Hom}_{A_{T}^{\rm e}}
(\partial^{n}, A_{T})$}
Now, by using Lemma~\ref{image_maps}, we have a $K$-basis 
of ${\rm Im}\,{\rm Hom}_{A_{T}^{\rm e}}(\partial^{n},A_{T})$ 
for $n\geq1$: 
\begin{lem}\label{image_basis}
For $m\geq0$, we have
\begin{enumerate}[\rm(a)]
\item \begin{enumerate}[\rm(1)]
\item If $T=0$, then ${\rm Im}\,{\rm Hom}_{A_{0}^{\rm e}}
(\partial^{4m+4},A_{0})=\{0\}$. 
\item If $T>0$, then $\{\gamma_{i,j}^{4m+4,l}+
\gamma_{i-1,j}^{4m+4,l},\ \beta_{i+1,j}^{4m+4,l}+
\beta_{i,j}^{4m+4,l},\ \beta_{i,k}^{4m+4,l}+
\beta_{i-1,k}^{4m+4,l},\ \gamma_{i+1,k}^{4m+4,l}+
\gamma_{i,k}^{4m+4,l}\mid1\leq l\leq T;\ i=0,2;\ 0\leq j\leq
2m+1;\ 2m+3\leq k\leq4m+4\}$ is a $K$-basis of 
${\rm Im}\,{\rm Hom}_{A_{T}^{\rm e}}(\partial^{4m+4},A_{T})$. 
\end{enumerate}
\item \begin{enumerate}[\rm(1)]
\item If $T=0$, then $\{\gamma_{1,j}^{4m+1,0}-
\beta_{0,j}^{4m+1,0}+\beta_{1,j+1}^{4m+1,0}-
\gamma_{0,j+1}^{4m+1,0},\ \beta_{2,j}^{4m+1,0}-
\gamma_{1,j}^{4m+1,0}+\gamma_{2,j+1}^{4m+1,0}-
\beta_{1,j+1}^{4m+1,0},\ \gamma_{3,j}^{4m+1,0}-
\beta_{2,j}^{4m+1,0}+\beta_{3,j+1}^{4m+1,0}-
\gamma_{2,j+1}^{4m+1,0}\mid0\leq j\leq4m\}$ is a $K$-basis of 
${\rm Im}\,{\rm Hom}_{A_{0}^{\rm e}}(\partial^{4m+1},A_{0})$. 
\item If $T>0$, then 
$\{\gamma_{i+1,j}^{4m+1,0}-\beta_{i,j}^{4m+1,0}+
\beta_{i+1,j+1}^{4m+1,T}-\gamma_{i,j+1}^{4m+1,T},\ 
\beta_{2,j}^{4m+1,0}-\gamma_{1,j}^{4m+1,0}+
\gamma_{2,j+1}^{4m+1,T}-\beta_{1,j+1}^{4m+1,T},\ 
\gamma_{i+1,k}^{4m+1,T}-\beta_{i,k}^{4m+1,T}+
\beta_{i+1,k+1}^{4m+1,0}-\gamma_{i,k+1}^{4m+1,0},\ 
\beta_{2,k}^{4m+1,T}-\gamma_{1,k}^{4m+1,T}+
\gamma_{2,k+1}^{4m+1,0}-\beta_{1,k+1}^{4m+1,0}\mid i=0,2;\ 
0\leq j\leq2m-1;\ 2m+1\leq k\leq4m\}\cup
\{\gamma_{i+1,2m}^{4m+1,0}-\beta_{i,2m}^{4m+1,0}+
\beta_{i+1,2m+1}^{4m+1,0}-\gamma_{i,2m+1}^{4m+1,0},\ 
\beta_{2,2m}^{4m+1,0}-\gamma_{1,2m}^{4m+1,0}+
\gamma_{2,2m+1}^{4m+1,0}-\beta_{1,2m+1}^{4m+1,0}\mid i=0,2\}
\cup\{\beta_{i,j}^{4m+1,l},\ \gamma_{i+1,j}^{4m+1,l},\ 
\gamma_{i,k}^{4m+1,l},\ \beta_{i+1,k}^{4m+1,l}\mid 
1\leq l\leq T;\ i=0,2;\ 0\leq j\leq2m-1;\ 2m+2\leq k\leq4m+1\}
\cup\{\beta_{i+1,2m+1}^{4m+1,l}-\beta_{i,2m}^{4m+1,l},\ 
\gamma_{i+1,2m}^{4m+1,l}-\gamma_{i,2m+1}^{4m+1,l},\ 
\beta_{2,2m}^{4m+1,l}-\beta_{1,2m+1}^{4m+1,l},\ 
\gamma_{2,2m+1}^{4m+1,l}-\gamma_{1,2m}^{4m+1,l}\mid 
i=0,2;\ 1\leq l\leq T\}$ is a $K$-basis of 
${\rm Im}\,{\rm Hom}_{A_{T}^{\rm e}}(\partial^{4m+1},A_{T})$. 
\end{enumerate}
\item \begin{enumerate}[\rm(1)]
\item If $T=0$, then $\{\beta_{i+1,j}^{4m+2,0}+
\beta_{i,j}^{4m+2,0}\mid i=0,1,2;\ 0\leq j\leq4m+2\}$ is a 
$K$-basis of ${\rm Im}\,{\rm Hom}_{A_{0}^{\rm e}}(\partial^{4m+2},
A_{0})$. 
\item If $T>0$, then $\{\beta_{i+1,j}^{4m+2,l}+
\beta_{i,j}^{4m+2,l},\ \gamma_{i,j}^{4m+2,l}+
\gamma_{i-1,j}^{4m+2,l},\ \beta_{i,k}^{4m+2,l}+
\beta_{i-1,k}^{4m+2,l},\ \gamma_{i+1,k}^{4m+2,l}+
\gamma_{i,k}^{4m+2,l}\mid i=0,2;\ 0\leq l\leq T-1;\ 0\leq j
\leq2m;\ 2m+2\leq k\leq4m+2\}\cup\{\beta_{1,j}^{4m+2,T}+
\beta_{0,j}^{4m+2,T},\ \beta_{2,j}^{4m+2,T}+\beta_{1,j}^{4m+2,
T},\ \beta_{3,j}^{4m+2,T}+\beta_{2,j}^{4m+2,T}\mid0\leq j\leq
2m;\ 2m+2\leq j\leq 4m+2\}$
$$\cup\begin{cases}
\{\beta_{0,2m+1}^{4m+2,T}-\beta_{2,2m+1}^{4m+2,T},\ 
\beta_{1,2m+1}^{4m+2,T}-\beta_{3,2m+1}^{4m+2,T}\}
&\mbox{if ${\rm char}\, K\mid2T+1$}\\
\{\beta_{1,2m+1}^{4m+2,T}+\beta_{0,2m+1}^{4m+2,T},\ 
\beta_{2,2m+1}^{4m+2,T}+\beta_{1,2m+1}^{4m+2,T},\ 
\beta_{3,2m+1}^{4m+2,T}+\beta_{2,2m+1}^{4m+2,T}\}
&\mbox{if ${\rm char}\, K\nmid2T+1$}
\end{cases}$$
is a $K$-basis of ${\rm Im}\,{\rm Hom}_{A_{T}^{\rm e}}
(\partial^{4m+2},A_{T})$. 
\end{enumerate}
\item \begin{enumerate}[\rm(1)]
\item If $T=0$, then ${\rm Im}\,{\rm Hom}_{A_{0}^{\rm e}}
(\partial^{4m+3},A_{0})=\{0\}$. 
\item If $T>0$, then $\{\beta_{i,j}^{4m+3,l},\ \gamma_{i+1,
j}^{4m+3,l},\ \beta_{i+1,k}^{4m+3,l},\ \gamma_{i,k}^{4m+3,l}
\mid0\leq l\leq T-1;\ i=0,2;\ 0\leq j\leq2m;\ 2m+3\leq k\leq
4m+3\}\cup\{\beta_{1,2m+2}^{4m+3,l}-\beta_{0,2m+1}^{4m+3,l},\ 
\beta_{2,2m+1}^{4m+3,l}-\beta_{1,2m+2}^{4m+3,l},\ \beta_{3,
2m+2}^{4m+3,l}-\beta_{2,2m+1}^{4m+3,l},\ \gamma_{1,2m+1}^{4m+3,
l}-\gamma_{0,2m+2}^{4m+3,l},\ \gamma_{2,2m+2}^{4m+3,l}-
\gamma_{1,2m+1}^{4m+3,l},\ \gamma_{3,2m+1}^{4m+3,l}-\gamma_{2,
2m+2}^{4m+3,l}\mid0\leq l\leq T-1\}$ is a $K$-basis of 
${\rm Im}\,{\rm Hom}_{A_{T}^{\rm e}}(\partial^{4m+3},A_{T})$. 
\end{enumerate}
\end{enumerate}
\end{lem}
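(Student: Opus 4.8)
The proof is a finite linear-algebra computation, carried out separately in each residue class of $n$ modulo $4$ and in each of the cases $T=0$ and $T>0$. By definition ${\rm Im}\,{\rm Hom}_{A_{T}^{\rm e}}(\partial^{n},A_{T})$ is spanned by the images of the basis elements of ${\rm Hom}_{A_{T}^{\rm e}}(Q^{n-1},A_{T})$ listed in Lemma~\ref{base_hom}, and these images are exactly the expressions computed in Lemma~\ref{image_maps}. Since the $\beta_{i,j}^{n,l}$ and $\gamma_{i,j}^{n,l}$ form a $K$-basis of the target ${\rm Hom}_{A_{T}^{\rm e}}(Q^{n},A_{T})$, the whole statement reduces to extracting, from this explicit finite spanning set, a maximal linearly independent subset and checking that it coincides with the displayed list. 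First I would record the trivial vanishings: when $T=0$ the source Hom-spaces of $\partial^{4m+4}$ and of $\partial^{4m+3}$ are zero by Lemma~\ref{base_hom}, which immediately gives (a)(1) and (d)(1).

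The key structural observation is that distinct target basis elements (distinct $(i,j,l)$, and $\beta$ versus $\gamma$) are linearly independent, so the span decomposes as a direct sum of "layers" indexed by $j$ (and by $l$, and by the $\beta/\gamma$ type whenever these do not mix). Within each layer the images of Lemma~\ref{image_maps} are adjacent sums or differences $\pm(\,\cdot_{i}\pm\cdot_{i-1})$ supported on the four vertices $i=0,1,2,3$, and counting how many independent such combinations occur in a layer (using the elementary fact that the four cyclic adjacent sums span a $3$-dimensional space with the single alternating-sum relation) yields a basis of that layer. Applying this layer by layer produces exactly the adjacent-sum and adjacent-difference generators displayed in (a)(2), (b), (c) and (d)(2); in the degree-two case with $T=0$ it collapses to the single surviving family of $\beta$-sums, giving (c)(1). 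Linear independence of each final list is then read off directly from the standard basis, and spanning amounts to checking that every image in Lemma~\ref{image_maps} lies in the listed span, after which a dimension count confirms that the families are bases.

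Two points require genuine care. First, in the case $n\equiv1$ $({\rm mod}\,4)$ (part (b)) the degree-$4m$ cochains satisfy $\beta_{i,j}^{4m,0}=\gamma_{i,j}^{4m,0}$ and map, by Lemma~\ref{image_maps}(d), to four-term combinations mixing the superscripts $0$ and $T$; here I would verify that, as $j$ runs over its range, these four-term vectors together with the remaining images coming from $1\leq l\leq T$ are independent and span the claimed families. Second, and this is the main obstacle, is the exceptional middle index in part (c)(2). At $j=2m+1$ the relevant images are the rows of Lemma~\ref{image_maps}(a) carrying the coefficients $T+1$ and $T$, and they are governed by the $4\times4$ coefficient matrix $M$ whose rows are the four vectors $(T+1,T,T,T+1)$, $(T,T+1,T+1,T)$, $(T+1,T+1,T,T)$, $(T,T,T+1,T+1)$. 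Since the sum of its first two rows equals the sum of its last two rows and equals $(2T+1)(1,1,1,1)$, one finds ${\rm rank}\,M=3$ when $2T+1\neq0$ in $K$ and ${\rm rank}\,M=2$ when ${\rm char}\,K\mid2T+1$.

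In the first case the row space of $M$ is the hyperplane $\{(a,b,c,d)\mid a-b+c-d=0\}$, which is spanned by the three adjacent sums $\beta_{1,2m+1}^{4m+2,T}+\beta_{0,2m+1}^{4m+2,T}$, $\beta_{2,2m+1}^{4m+2,T}+\beta_{1,2m+1}^{4m+2,T}$, $\beta_{3,2m+1}^{4m+2,T}+\beta_{2,2m+1}^{4m+2,T}$; in the second case it is $\langle(1,-1,-1,1),(1,1,-1,-1)\rangle$, which is spanned by $\beta_{0,2m+1}^{4m+2,T}-\beta_{2,2m+1}^{4m+2,T}$ and $\beta_{1,2m+1}^{4m+2,T}-\beta_{3,2m+1}^{4m+2,T}$. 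These are precisely the two branches displayed in (c)(2), so the case distinction on ${\rm char}\,K\mid2T+1$ is forced exactly by this rank drop at the middle index. With this rank computation in hand, the remaining verifications in (c)(2) are again routine layerwise bookkeeping, and everything away from this single index is purely mechanical across the four residues.
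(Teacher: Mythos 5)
The paper omits the proof of this lemma, stating only that it follows by direct computation from Lemma~\ref{image_maps}; your outline is exactly that intended computation, and in particular you correctly isolate the one non-routine point, namely the rank drop (from $3$ to $2$) of the $4\times4$ middle-index coefficient matrix with rows $(T+1,T,T,T+1)$, etc., which is precisely what forces the case split on ${\rm char}\,K\mid 2T+1$ in part (c)(2). One small slip: for part (d)(1) it is the \emph{target} ${\rm Hom}_{A_{0}^{\rm e}}(Q^{4m+3},A_{0})$ of ${\rm Hom}_{A_{0}^{\rm e}}(\partial^{4m+3},A_{0})$ that vanishes rather than its source ${\rm Hom}_{A_{0}^{\rm e}}(Q^{4m+2},A_{0})$, but the conclusion that the image is zero is of course unaffected.
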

\noindent
The proof of this lemma follows from easy computations, and 
thus we omit it.

As an immediate consequence, we get the dimension of 
${\rm Im}\,{\rm Hom}_{A_{T}^{\rm e}}(\partial^{n},A_{T})$ for 
$n\geq1$: 
\begin{cor}\label{dim_im}
For $T\geq0$, $m\geq0$, and $0\leq r\leq3$, 
$$\dim_{K}{\rm Im}\,{\rm Hom}_{A_{T}^{\rm e}}(\partial^{4m+r},
A_{T})=\begin{cases}
16Tm 			& \mbox{if $r=0$ (where $m\neq0$)}\\
2T(8m+3)+3(4m+1)	& \mbox{if $r=1$}\\
8T(2m+1)+4(3m+2) 	& \mbox{if ${\rm char}\, K\mid2T+1$ and 
$r=2$}\\
8T(2m+1)+3(4m+3) 	& \mbox{if ${\rm char}\, K\nmid2T+1$ and 
$r=2$}\\
2T(8m+7) 		& \mbox{if $r=3$.}\\
\end{cases}$$
\end{cor}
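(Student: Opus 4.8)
The plan is to read each dimension directly off the explicit $K$-bases furnished by Lemma~\ref{image_basis}: since those are (by that lemma) linearly independent spanning sets, the dimension of each image equals the cardinality of the corresponding basis. Hence the whole argument reduces to enumerating the basis elements in each of the eight displayed cases, organized by the residue $r$ of the superscript modulo $4$ and by the dichotomy $T=0$ versus $T>0$, and then matching the totals against the claimed formulas.

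First I would settle the case $r=0$, where a small reindexing is needed: Lemma~\ref{image_basis}(a) exhibits a basis of ${\rm Im}\,{\rm Hom}_{A_T^{\rm e}}(\partial^{4m+4},A_T)$, whereas the corollary asks about $\partial^{4m}$, so I substitute $m\mapsto m-1$ (valid since $m\neq0$). When $T=0$ the image is $\{0\}$, agreeing with $16Tm=0$; when $T>0$ the basis consists of four families of summed generators, each ranging over $i\in\{0,2\}$, over $l\in\{1,\dots,T\}$, and over an index range of length $2m+2$, so after the shift its cardinality is $4\cdot2\cdot T\cdot 2m=16Tm$.

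Next I would perform the parallel counts for $r=1,2,3$. For $r=1$ and $T>0$ the basis splits into the combinations mixing the $l=0$ and $l=T$ generators (three combinations over each of the two index ranges $\{0,\dots,2m-1\}$ and $\{2m+1,\dots,4m\}$, giving $12m$), the three exceptional generators at the central index $j=2m$, and the pure $l$-families for $l\in\{1,\dots,T\}$ together with their boundary corrections (contributing $16mT+6T$); the sum is $12m+3+16mT+6T=2T(8m+3)+3(4m+1)$, while the $T=0$ basis of three families over $\{0,\dots,4m\}$ gives $3(4m+1)$. For $r=3$ and $T>0$ the two blocks contribute $8T(2m+1)$ and $6T$, totalling $2T(8m+7)$, and the image is zero when $T=0$. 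The case $r=2$ is the one where the count genuinely bifurcates: the generic $l\in\{0,\dots,T-1\}$ families together with the $l=T$ families contribute $8T(2m+1)+3(4m+2)$ regardless of the characteristic, and the final characteristic-dependent block then adjoins exactly two or three generators according as ${\rm char}\,K$ divides $2T+1$ or not, yielding $8T(2m+1)+4(3m+2)$ in the divisible case and $8T(2m+1)+3(4m+3)$ otherwise.

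I expect the only real difficulty to be bookkeeping rather than anything conceptual. One must pin down the precise overlap of the $j$-range and the $k$-range in each case, count the generators sitting at the central indices $j=2m$, $j=2m+1$, or $k=2m+2$ once and only once, keep track of which families carry the index $i\in\{0,2\}$ and which are single exceptional elements, and respect the $m\mapsto m-1$ shift in the $r=0$ case. Provided every finite index set appearing in Lemma~\ref{image_basis} is enumerated with no double-counting and no omission, the five displayed values follow at once.
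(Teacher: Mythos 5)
Your proposal is correct and coincides with the paper's own (implicit) argument: the corollary is stated there as an immediate consequence of Lemma~\ref{image_basis}, i.e.\ obtained by counting the listed basis elements, and your enumeration in each of the cases $r=0,1,2,3$ (including the $m\mapsto m-1$ shift for $r=0$ and the characteristic dichotomy for $r=2$) reproduces the stated dimensions exactly.
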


\subsection{A basis of ${\rm Ker}\,{\rm Hom}_{A_{T}^{\rm e}}
(\partial^{n}, A_{T})$} 
By using Lemma~\ref{image_maps}, we also have a $K$-basis 
of ${\rm Ker}\,{\rm Hom}_{A_{T}^{\rm e}}(\partial^{n}, A_{T})$ 
for $n\geq1$:

\begin{lem}\label{kernel_basis}
For $m\geq0$, we have 
\begin{enumerate}[\rm(a)]
\item \begin{enumerate}[\rm(1)]

\item If $T=0$, then ${\rm Ker}\,{\rm Hom}_{A_{0}^{\rm e}}
(\partial^{4m+4}, A_{0})=\{0\}$.

\item If $T>0$, then $\{\beta_{i,j}^{4m+3,l},\ \gamma_{i+1,
j}^{4m+3,l},\ \gamma_{i,k}^{4m+3,l},\ \beta_{i+1,k}^{4m+3,l}
\mid i=0,2;\ 0\leq l\leq T-1;\ 0\leq j\leq2m+1;\ 2m+2\leq k
\leq4m+3\}$ is a $K$-basis of ${\rm Ker}\,{\rm Hom}_{A_{T}^{\rm e}}
(\partial^{4m+4},A_{T})$. 
\end{enumerate}
\item \begin{enumerate}[\rm(1)]

\item If $T=0$, then $\{\sum_{i=0}^{3}\beta_{i,j}^{4m,0}\mid0
\leq j\leq4m\}$ is a $K$-basis of ${\rm Ker}\,{\rm Hom}_{
A_{0}^{\rm e}}(\partial^{4m+1},A_{0})$.

\item If $T>0$, then $\{\sum_{i=0}^{3}\beta_{i,j}^{4m,0}\mid
0\leq j\leq4m\}\cup\{\beta_{i+1,j}^{4m,l}+\beta_{i,j}^{4m,l},\ 
\gamma_{i,j}^{4m,l}+\gamma_{i-1,j}^{4m,l},\ \gamma_{i+1,k}^{4m,
l}+\gamma_{i,k}^{4m,l},\ \beta_{i,k}^{4m,l}+\beta_{i-1,k}^{4m,
l}\mid i=0,2;\ 1\leq l\leq T;\ 0\leq j\leq2m-1;\ 2m+1\leq k
\leq4m\}\cup\{\sum_{i=0}^{3}\beta_{i,2m}^{4m,l},\ \sum_{i=0}^{3}
\gamma_{i,2m}^{4m,l}\mid1\leq l\leq T\}$ is a $K$-basis of 
${\rm Ker}\,{\rm Hom}_{A_{T}^{\rm e}}(\partial^{4m+1},A_{T})$. 
\end{enumerate}
\item \begin{enumerate}[\rm(1)]
\item If $T=0$, then $\{\beta_{0,j}^{4m+1,0}+\gamma_{0,j+1}^{4
m+1,0},\ \beta_{i+1,j+1}^{4m+1,0}-\gamma_{i,j+1}^{4m+1,0}+
\gamma_{i+1,j}^{4m+1,0}-\beta_{i,j}^{4m+1,0},\ \gamma_{2,j+1
}^{4m+1,0}-\beta_{1,j+1}^{4m+1,0}+\beta_{2,j}^{4m+1,0}-
\gamma_{1,j}^{4m+1,0}\mid i=0,2;\ 0\leq j\leq4m\}\cup\{
\gamma_{0,j}^{4m+1,0}+\beta_{1,j}^{4m+1,0}+\gamma_{2,j}^{4m+1,
0}+\beta_{3,j}^{4m+1,0}\mid0\leq j\leq2m+1\}\cup\{\beta_{0,j}^{4
m+1,0}+\gamma_{1,j}^{4m+1,0}+\beta_{2,j}^{4m+1,0}+\gamma_{3,
j}^{4m+1,0}\mid2m+1\leq j\leq4m+1\}$ is a $K$-basis of 
${\rm Ker}\,{\rm Hom}_{A_{0}^{\rm e}}(\partial^{4m+2},A_{0})$.

\item If $T>0$, then $\{\beta_{0,j}^{4m+1,0}+
\gamma_{0,j+1}^{4m+1,T},\ \gamma_{i+1,j}^{4m+1,0}-
\beta_{i,j}^{4m+1,0}+\beta_{i+1,j+1}^{4m+1,T}-\gamma_{i,
j+1}^{4m+1,T},\ \beta_{2,j}^{4m+1,0}-\gamma_{1,j}^{4m+1,0}+
\gamma_{2,j+1}^{4m+1,T}-\beta_{1,j+1}^{4m+1,T}\mid i=0,2;\ 
0\leq j\leq2m-1\}\cup\{\beta_{0,j}^{4m+1,T}+
\gamma_{0,j+1}^{4m+1,0},\ \gamma_{i+1,j}^{4m+1,T}-
\beta_{i,j}^{4m+1,T}+\beta_{i+1,j+1}^{4m+1,0}-\gamma_{i,
j+1}^{4m+1,0},\ \beta_{2,j}^{4m+1,T}-\gamma_{1,j}^{4m+1,T}+
\gamma_{2,j+1}^{4m+1,0}-\beta_{1,j+1}^{4m+1,0}
\mid i=0,2;\ 2m+1\leq j\leq4m\}\cup\{\beta_{0,2m}^{4m+1,0}+
\gamma_{0,2m+1}^{4m+1,0},\ \gamma_{i+1,2m}^{4m+1,0}-
\beta_{i,2m}^{4m+1,0}+\beta_{i+1,2m+1}^{4m+1,0}-\gamma_{i,
2m+1}^{4m+1,0},\ \beta_{2,2m}^{4m+1,0}-\gamma_{1,2m}^{4m+1,0}+
\gamma_{2,2m+1}^{4m+1,0}-\beta_{1,2m+1}^{4m+1,0}\mid i=0,2\}
\cup\{\gamma_{0,j}^{4m+1,T}+\beta_{1,j}^{4m+1,T}+
\gamma_{2,j}^{4m+1,T}+\beta_{3,j}^{4m+1,T},\  
\beta_{0,k}^{4m+1,T}+\gamma_{1,k}^{4m+1,T}+
\gamma_{2,k}^{4m+1,T}+\beta_{3,k}^{4m+1,T} 
\mid 0\leq j\leq2m;\ 2m+1\leq k\leq4m+1\}
\cup\{\beta_{i,j}^{4m+1,l},\ \gamma_{i+1,j}^{4m+1,l},\ 
\gamma_{i,k}^{4m+1,l},\ \beta_{i+1,k}^{4m+1,l}\mid
1\leq l\leq T;\ i=0,2;\ 0\leq j\leq2m;\ 2m+1\leq k\leq4m+1\}$ 
$$\cup\begin{cases}
\{\gamma_{0,2m+1}^{4m+1,0}+\gamma_{2,2m+1}^{4m+1,0},\ 
\beta_{1,2m+1}^{4m+1,0}+\beta_{3,2m+1}^{4m+1,0}\}
&\mbox{if ${\rm char}\, K\mid2T+1$}\\
\{\gamma_{0,2m+1}^{4m+1,0}+\beta_{1,2m+1}^{4m+1,0}+
\gamma_{2,2m+1}^{4m+1,0}+\beta_{3,2m+1}^{4m+1,0}
\}&\mbox{if ${\rm char}\, K\nmid2T+1$}
\end{cases}$$
is a $K$-basis of ${\rm Ker}\,{\rm Hom}_{A_{T}^{\rm e}}
(\partial^{4m+2},A_{T})$. 
\end{enumerate}
\item \begin{enumerate}[\rm(1)]
\item If $T=0$, then $\{\beta_{i,j}^{4m+2,0}\mid0\leq i\leq
3;\ 0\leq j\leq4m+2\}$ is a $K$-basis of ${\rm Ker}\,{\rm 
Hom}_{A_{0}^{\rm e}}(\partial^{4m+3},A_{0})$. 
\item If $T>0$, then $\{\beta_{i+1,j}^{4m+2,l}+\beta_{i,j}^{4m+2,
l},\ \gamma_{i,j}^{4m+2,l}+\gamma_{i-1,j}^{4m+2,l},\ \gamma_{i+1,
k}^{4m+2,l}+\gamma_{i,k}^{4m+2,l},\ \beta_{i,k}^{4m+2,l}+
\beta_{i-1,k}^{4m+2,l}\mid i=0,2;\ 0\leq l\leq T-1;\ 0\leq j
\leq2m;\ 2m+2\leq k\leq4m+2\}\cup\{\sum_{i=0}^{3}\beta_{i,
2m+1}^{4m+2,l},\ \sum_{i=0}^{3}\gamma_{i,2m+1}^{4m+2,l}\mid0
\leq l\leq T-1\}\cup\{\beta_{i,j}^{4m+2,T}\mid0\leq i\leq3;\ 
0\leq j\leq4m+2\}$ is a $K$-basis of ${\rm Ker}\,{\rm 
Hom}_{A_{T}^{\rm e}}(\partial^{4m+3},A_{T})$. 
\end{enumerate}
\end{enumerate}
\end{lem}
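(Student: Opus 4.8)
The plan is to compute each of the eight kernels by elementary linear algebra. Fix $n\geq1$ and write $\psi^{n}:={\rm Hom}_{A_{T}^{\rm e}}(\partial^{n},A_{T})$ for the $K$-linear map from ${\rm Hom}_{A_{T}^{\rm e}}(Q^{n-1},A_{T})$ to ${\rm Hom}_{A_{T}^{\rm e}}(Q^{n},A_{T})$. Lemma~\ref{base_hom} supplies explicit $K$-bases of both the source and the target in terms of the maps $\beta_{i,j}^{\bullet,l}$ and $\gamma_{i,j}^{\bullet,l}$, while Lemma~\ref{image_maps} expresses the image under $\psi^{n}$ of each source basis vector as a short $K$-linear combination of target basis vectors. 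Reading these expressions as the columns of a matrix, I would obtain ${\rm Ker}\,\psi^{n}$ by solving the associated homogeneous linear system. The first step is to collect the source basis vectors that $\psi^{n}$ sends to $0$: by Lemma~\ref{image_maps} these are exactly the vectors annihilated by the index restrictions (for example, in part~(d), the top-weight maps $\beta_{i,j}^{4m+2,T}$, where one uses $\gamma_{i,j}^{4m+2,T}=-\beta_{i,j}^{4m+2,T}$ so that only the $\beta$-family is counted). Each such vector lies in ${\rm Ker}\,\psi^{n}$ and contributes the ``unlinked'' part of the asserted basis.

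For the remaining source basis vectors the images are nonzero short sums, and the structural point is that the coupling is local: a given target basis vector occurs in the image of only a bounded number of source vectors, so the matrix splits into small blocks, each involving a few source vectors (possibly of different weights, because $\partial^{n}$ mixes the weights $0$ and $T$ and shifts $j\mapsto j+1$). Solving each block for its $K$-linear relations produces two kinds of generators: ``paired'' ones such as $\beta_{i+1,j}^{\bullet,l}+\beta_{i,j}^{\bullet,l}$ and $\gamma_{i,j}^{\bullet,l}+\gamma_{i-1,j}^{\bullet,l}$, arising from two columns with equal or opposite single-target images, and ``cyclic'' ones such as $\sum_{i=0}^{3}\beta_{i,2m}^{4m,l}$ and $\sum_{i=0}^{3}\gamma_{i,2m+1}^{4m+2,l}$, arising from four columns whose images telescope around the quiver. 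Linear independence of the resulting list is transparent, since its members are supported on pairwise disjoint (or visibly independent) families of source basis vectors. Completeness is then forced by rank--nullity, $\dim_{K}{\rm Ker}\,\psi^{n}=\dim_{K}{\rm Hom}_{A_{T}^{\rm e}}(Q^{n-1},A_{T})-\dim_{K}{\rm Im}\,\psi^{n}$, where the first term is read off from Lemma~\ref{base_hom} and the second from Corollary~\ref{dim_im}; in each case I would verify that the number of generators produced equals this value. The subcase $T=0$ falls out along the way: by Lemma~\ref{base_hom} several of the spaces ${\rm Hom}_{A_{0}^{\rm e}}(Q^{n},A_{0})$ collapse (the residue $n\equiv3$ vanishing outright), so most blocks degenerate and the kernels can be read off directly.

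I expect the residue $n\equiv2$ $({\rm mod}\,4)$ of part~(c) to be the main obstacle, since this is where the characteristic of $K$ enters. Lemma~\ref{image_maps}~(a) shows that the eight weight-$0$ source vectors $\beta_{0,2m}^{4m+1,0},\beta_{2,2m}^{4m+1,0},\beta_{1,2m+1}^{4m+1,0},\beta_{3,2m+1}^{4m+1,0}$ and $\gamma_{0,2m+1}^{4m+1,0},\gamma_{2,2m+1}^{4m+1,0},\gamma_{1,2m}^{4m+1,0},\gamma_{3,2m}^{4m+1,0}$ all map into the single four-dimensional ``middle'' space spanned by $\beta_{0,2m+1}^{4m+2,T},\dots,\beta_{3,2m+1}^{4m+2,T}$, and that each $\gamma$-column is the negative of the corresponding $\beta$-column. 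The four pairings of a $\beta$-column with its matching $\gamma$-column lie in the kernel automatically, and the remaining relations are governed by the $4\times4$ circulant with first row $(T+1,T+1,T,T)$, whose row sum is $2(2T+1)$. The alternating vector $(1,-1,1,-1)$ always lies in the kernel of this circulant, so its nullity is at least $1$; a second independent kernel vector, the all-ones vector $(1,1,1,1)$, appears precisely when $2T+1=0$ in $K$, and since $2T+1$ is odd this forces ${\rm char}\,K$ to be an odd prime dividing $2T+1$ (so that the two vectors are genuinely distinct). Hence the circulant has nullity $2$ when ${\rm char}\,K\mid2T+1$ and nullity $1$ otherwise, yielding respectively two or one extra kernel generators; this is exactly the origin of the case split displayed in part~(c), and dually of the split in the image computation of Lemma~\ref{image_basis}~(c). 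Isolating this circulant block from the surrounding two-term images, while keeping track of the weight- and index-shifts that $\partial^{4m+2}$ introduces, is where essentially all of the bookkeeping resides; once it is settled, the other residues are routine.
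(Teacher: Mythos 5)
Your proposal is correct and follows essentially the same route the paper intends: the paper omits this proof as ``easy computations,'' and what you describe --- reading the columns of ${\rm Hom}_{A_{T}^{\rm e}}(\partial^{n},A_{T})$ off Lemma~\ref{image_maps} in the bases of Lemma~\ref{base_hom}, solving the small blocks, and confirming completeness by rank--nullity against Corollary~\ref{dim_im} --- is exactly that computation. In particular your analysis of the $4\times4$ block with rows summing to $2(2T+1)$ (forced relations $c_{0}=c_{2}$, $c_{1}=c_{3}$, then $(2T+1)(c_{0}+c_{1})=0$) correctly isolates the source of the ${\rm char}\,K\mid 2T+1$ case split in part (c).
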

\noindent
The proof of this lemma follows from easy computations, 
and thus we omit it.

By the lemma above, we immediately have the dimension of 
${\rm Ker}\,{\rm Hom}_{A_{T}^{\rm e}}(\partial^{n}, A_{T})$ 
for $n\geq1$: 
\begin{cor}\label{dim_kr}
For $T\geq0$, $m\geq0$, and $0\leq r\leq3$, 
$$\dim_{K}{\rm Ker}\,{\rm Hom}_{A_{T}^{\rm e}}(\partial^{4m+r},
A_{T})=\begin{cases}
16Tm			& \mbox{if $r=0$ (where $m\neq0$)}\\
2T(8m+1)+4m+1		& \mbox{if $r=1$}\\
8T(2m+1)+4(5m+2)	& \mbox{if ${\rm char}\, K\mid2T+1$ and $r=2$}\\
8T(2m+1)+20m+7		& \mbox{if ${\rm char}\, K\nmid2T+1$ and $r=2$}\\
2T(8m+5)+4(4m+3)	& \mbox{if $r=3$.}
\end{cases}$$
\end{cor}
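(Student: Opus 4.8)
The plan is to read off each dimension directly from the explicit bases recorded in Lemma~\ref{kernel_basis}, since a basis determines the dimension by its cardinality. Thus the entire proof reduces to a careful enumeration: for each residue $r$ and each subcase, I count the number of listed basis vectors by multiplying the number of distinct ``types'' of generator in each displayed set by the sizes of the ranges through which the indices $i,j,k,l$ run, and then I sum these cardinalities and simplify.

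First I would fix the correspondence between the four entries of the corollary and the four parts of Lemma~\ref{kernel_basis}, paying attention to one index shift: part (a) of the lemma describes ${\rm Ker}\,{\rm Hom}_{A_{T}^{\rm e}}(\partial^{4m+4},A_{T})$, so to obtain the $r=0$ entry, namely ${\rm Ker}\,{\rm Hom}_{A_{T}^{\rm e}}(\partial^{4m},A_{T})$, one applies part (a) with $m$ replaced by $m-1$. This both explains the hypothesis $m\neq0$ and yields the count $8T\big(2(m-1)+2\big)=16Tm$. Parts (b), (c), (d) match $r=1,2,3$ directly.

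Next, in each subcase I carry out the count. For instance, in part (b)(2) the three displayed families contribute $4m+1$, then $16Tm$ (four types, with $i$ ranging over $\{0,2\}$, $l$ over $1\le l\le T$, and $j$ or $k$ over ranges of size $2m$), and then $2T$, summing to $2T(8m+1)+(4m+1)$; similarly part (d)(2) gives $8T(2m+1)+2T+(16m+12)=2T(8m+5)+4(4m+3)$. The $T=0$ subcases specialize these same bases and should be checked to agree with the formulas upon setting $T=0$; in particular, since ${\rm char}\,K\nmid1$ always holds, the value $T=0$ automatically falls into the ${\rm char}\,K\nmid2T+1$ branch of the $r=2$ formula, so no separate verification of the divisible branch is needed there.

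The only delicate point, and the one I would treat most carefully, is the $r=2$ case. Here part (c)(2) lists six separate families whose index ranges differ (some indexed by $0\le j\le2m-1$, others by $2m+1\le j\le4m$, others by a single value $j=2m$ or $j=2m+1$), so the bookkeeping is error-prone; moreover the final family depends on whether ${\rm char}\,K$ divides $2T+1$, contributing two vectors in the divisible case and one in the non-divisible case. Tracking this $+2$ versus $+1$ is exactly what produces the two different closed forms $8T(2m+1)+4(5m+2)$ and $8T(2m+1)+20m+7$ for $r=2$. Once all six counts are assembled, the arithmetic simplification to the stated expressions is routine, and the linear independence underlying the word ``basis'' is already part of Lemma~\ref{kernel_basis}, so the cardinality genuinely equals the dimension.
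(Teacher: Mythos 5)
Your proposal is correct and follows exactly the paper's (implicit) proof: the corollary is obtained by counting the cardinalities of the explicit bases in Lemma~\ref{kernel_basis}, including the index shift $m\mapsto m-1$ in part (a) for the $r=0$ case and the char-dependent $+2$ versus $+1$ in part (c)(2). All of your counts check out against the stated formulas.
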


\subsection{The Hochschild cohomology group of $A_{T}$}
Now, by Lemmas~\ref{image_basis} and \ref{kernel_basis}, 
we have a $K$-basis of the Hochschild cohomology group 
${\rm HH}^{n}(A_{T})$ for $n\geq0$:

\begin{prop}\label{basis_HH}
For $T\geq0$ and $m\geq0$, 
\begin{enumerate}[\rm(a)]
\item If $T=0$, then 
\begin{enumerate}[\rm(1)]
\item $\{\sum_{i=0}^{3}\beta_{i,j}^{4m,0}\mid0\leq j\leq4m\}$ 
is a $K$-basis of ${\rm HH}^{4m}(A_{0})$.

\item $\{\beta_{0,j}^{4m+1,0}+\gamma_{0,j+1}^{4m+1,0}\mid0\leq 
j\leq4m\}\cup\{\gamma_{0,j}^{4m+1,0}+\beta_{1,j}^{4m+1,0}+
\gamma_{2,j}^{4m+1,0}+\beta_{3,j}^{4m+1,0}\mid 0\leq j\leq2m+1\}
\cup\{\beta_{0,j}^{4m+1,0}+\gamma_{1,j}^{4m+1,0}+\beta_{2,
j}^{4m+1,0}+\gamma_{3,j}^{4m+1,0}\mid2m+1\leq j\leq4m+1\}$ is a 
$K$-basis of ${\rm HH}^{4m+1}(A_{0})$.

\item $\{\beta_{0,j}^{4m+2,0}\mid0\leq j\leq4m+2\}$ is a 
$K$-basis of ${\rm HH}^{4m+2}(A_{0})$.

\item ${\rm HH}^{4m+3}(A_{0})=\{0\}$. 
\end{enumerate}

\item If $T>0$, then@
\begin{enumerate}[\rm(1)]
\item $\{\sum_{i=0}^{3}\beta_{i,j}^{4m,0},\ \sum_{i=0}^{3}
\beta_{i,2m}^{4m,l},\ \sum_{i=0}^{3}\gamma_{i,2m}^{4m,l}\mid
0\leq j\leq4m;\ 1\leq l\leq T\}$ is a $K$-basis of ${\rm HH}^{4m}(A_{T})$.

\item $\{\beta_{0,j}^{4m+1,0}+\gamma_{0,j+1}^{4m+1,T}\mid0
\leq j\leq2m-1\}\cup\{\beta_{0,j}^{4m+1,T}+\gamma_{0,j+1}^{4m+1,
0}\mid2m+1\leq j\leq4m\}\cup\{\beta_{0,2m}^{4m+1,0}+\gamma_{0,
2m+1}^{4m+1,0}\}\cup\{\gamma_{0,j}^{4m+1,T}+\beta_{1,j}^{4m+
1,T}+\gamma_{2,j}^{4m+1,T}+\beta_{3,j}^{4m+1,T}\mid 0\leq j\leq
2m\}\cup\{\beta_{0,j}^{4m+1,T}+\gamma_{1,j}^{4m+1,T}+\beta_{2,
j}^{4m+1,T}+\gamma_{3,j}^{4m+1,T}\mid2m+1\leq j\leq4m+1\}\cup
\{\beta_{0,2m}^{4m+1,l},\ \gamma_{0,2m+1}^{4m+1,l}\mid1\leq 
l\leq T\}$
$$\cup\begin{cases}
\{\gamma_{0,2m+1}^{4m+1,0}+\gamma_{2,2m+1}^{4m+1,0},\ 
\beta_{1,2m+1}^{4m+1,0}+\beta_{3,2m+1}^{4m+1,0}\}
&\mbox{if ${\rm char}\, K\mid2T+1$}\\
\{\gamma_{0,2m+1}^{4m+1,0}+\beta_{1,2m+1}^{4m+1,0}+
\gamma_{2,2m+1}^{4m+1,0}+\beta_{3,2m+1}^{4m+1,0}
\}&\mbox{if ${\rm char}\, K\nmid2T+1$}
\end{cases}$$
is a $K$-basis of ${\rm HH}^{4m+1}(A_{T})$. 
\item $\{\sum_{i=0}^{3}\beta_{i,2m+1}^{4m+2,l},\ 
\sum_{i=0}^{3}\gamma_{i,2m+1}^{4m+2,l}\mid0\leq l\leq T-1\}
\cup\{\beta_{0,j}^{4m+2,T}\mid0\leq j\leq2m;\ 2m+2\leq j\leq4m+2\}$
$$\cup\begin{cases}
\{\beta_{0,2m+1}^{4m+2,T},\ \beta_{1,2m+1}^{4m+2,T}\} & 
\mbox{if ${\rm char}\, K\mid2T+1$}\\
\{\beta_{0,2m+1}^{4m+2,T}\} & \mbox{if ${\rm char}\, K\nmid2T+1$}
\end{cases}$$
is a $K$-basis of ${\rm HH}^{4m+2}(A_{T})$. 
\item $\{\beta_{0,2m+1}^{4m+3,l},\ \gamma_{0,2m+2}^{4m+3,l}
\mid0\leq l\leq T-1\}$ is a $K$-basis of ${\rm HH}^{4m+3}(A_{T})$. 
\end{enumerate}
\end{enumerate}
\end{prop}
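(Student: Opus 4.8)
The plan is to read off each
$${\rm HH}^{n}(A_{T})={\rm Ker}\,{\rm Hom}_{A_{T}^{\rm e}}(\partial^{n+1},A_{T})\big/{\rm Im}\,{\rm Hom}_{A_{T}^{\rm e}}(\partial^{n},A_{T})$$
directly from the two explicit bases already at hand: Lemma~\ref{kernel_basis} supplies a basis of the numerator and Lemma~\ref{image_basis} a basis of the denominator, with the index shift dictated by the definition (so e.g. ${\rm HH}^{4m}$ uses ${\rm Ker}\,{\rm Hom}(\partial^{4m+1})$ over ${\rm Im}\,{\rm Hom}(\partial^{4m})$, and ${\rm HH}^{4m+2}$ uses ${\rm Ker}\,{\rm Hom}(\partial^{4m+3})$ over ${\rm Im}\,{\rm Hom}(\partial^{4m+2})$). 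Since $(Q^{\bullet},\partial)$ is a complex, the composite ${\rm Hom}_{A_{T}^{\rm e}}(\partial^{n+1},A_{T})\circ{\rm Hom}_{A_{T}^{\rm e}}(\partial^{n},A_{T})={\rm Hom}_{A_{T}^{\rm e}}(\partial^{n}\partial^{n+1},A_{T})=0$, so the image lies inside the kernel and the quotient is well defined. The entire argument is then a finite linear-algebra reduction: I would express each image basis vector in terms of the kernel basis and exhibit the complementary kernel vectors as the claimed representatives.

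For several residue classes the reduction is immediate. When the relevant image vanishes, one reads off ${\rm HH}^{n}$ as the full kernel: this covers ${\rm HH}^{4m}(A_{0})$, where ${\rm Im}\,{\rm Hom}(\partial^{4m})=0$ by Lemma~\ref{image_basis}, so ${\rm HH}^{4m}(A_{0})$ equals the basis of Lemma~\ref{kernel_basis}. Dually, when the numerator vanishes ${\rm HH}^{n}=0$, as for ${\rm HH}^{4m+3}(A_{0})$ where ${\rm Ker}\,{\rm Hom}(\partial^{4m+4})=0$. When the image is spanned by telescoping relations such as $\beta_{i+1,j}^{4m+2,0}+\beta_{i,j}^{4m+2,0}$, as for ${\rm HH}^{4m+2}(A_{0})$, these relations force $\beta_{i,j}\equiv(-1)^{i}\beta_{0,j}$ in the quotient, so exactly the representatives $\beta_{0,j}^{4m+2,0}$ survive. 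In each such case I would state the reduction and then confirm the proposed representatives are independent modulo the image.

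The harder classes are ${\rm HH}^{4m+1}(A_{T})$ and ${\rm HH}^{4m+2}(A_{T})$ for $T>0$, where the image and kernel bases interlock in a more intricate pattern; here the first step is to sort the image generators of Lemma~\ref{image_basis} by their indices $(l,j)$ so that each one cancels a single kernel basis vector of Lemma~\ref{kernel_basis}, leaving the unhit vectors as the claimed basis. I expect the main obstacle to be the middle index $j=2m+1$ at top level $l=T$, where the coboundary formulas of Lemma~\ref{image_maps} produce the circulant combination $(T+1)\beta_{i,2m+1}^{4m+2,T}+T\beta_{i+1,2m+1}^{4m+2,T}+T\beta_{i+2,2m+1}^{4m+2,T}+(T+1)\beta_{i+3,2m+1}^{4m+2,T}$ (and its $\gamma$-analogue). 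Combining the contributions from $i=0$ and $i=2$ yields $\beta_{0}-\beta_{1}-\beta_{2}+\beta_{3}$ together with $(2T+1)(\beta_{0}+\beta_{1}+\beta_{2}+\beta_{3})$, so the rank of this piece of the image drops by one precisely when $2T+1=0$ in $K$. This determinant $2T+1$ is exactly the source of the split in the statement, and it must be treated by separating ${\rm char}\,K\mid 2T+1$ from ${\rm char}\,K\nmid 2T+1$ and recomputing which of the $\beta^{\,\cdot\,,T}_{\cdot,2m+1}$ are forced into the image in each case.

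Finally I would cross-check completeness against the dimension formulas: since $\dim_{K}{\rm HH}^{n}(A_{T})=\dim_{K}{\rm Ker}\,{\rm Hom}_{A_{T}^{\rm e}}(\partial^{n+1},A_{T})-\dim_{K}{\rm Im}\,{\rm Hom}_{A_{T}^{\rm e}}(\partial^{n},A_{T})$, I would take the differences supplied by Corollaries~\ref{dim_kr} and \ref{dim_im} and match them against the number of listed representatives in each residue class (for instance $\dim{\rm HH}^{4m}(A_{T})=(16Tm+2T+4m+1)-16Tm=4m+1+2T$, agreeing with the count in the $T>0$ case). This confirms that the proposed sets are not merely linearly independent modulo the image but actually span the quotient, which completes the proof.
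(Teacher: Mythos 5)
Your proposal is correct and follows exactly the route the paper takes: the paper states that Proposition~\ref{basis_HH} follows from Lemmas~\ref{image_basis} and \ref{kernel_basis} by computing ${\rm Ker}\,{\rm Hom}_{A_{T}^{\rm e}}(\partial^{n+1},A_{T})/{\rm Im}\,{\rm Hom}_{A_{T}^{\rm e}}(\partial^{n},A_{T})$ directly from those two explicit bases, which is precisely your linear-algebra reduction, including the rank drop governed by $2T+1$ and the dimension cross-check against Corollaries~\ref{dim_im} and \ref{dim_kr}.
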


\noindent
This proposition provides us with the main result in this 
paper. 
\begin{thm}\label{dim_HH}
For $T\geq0$ and $n\geq0$, the dimension of ${\rm HH}^{n}(A_{T})$ 
is given as follows{\rm:} For $m\geq0$ and $0\leq r\leq3$, 
$$\dim_{K}{\rm HH}^{4m+r}(A_{T})=
\begin{cases}
2T+4m+1		& \mbox{if $r=0$}\\
2T+8m+5		& \mbox{if ${\rm char}\, K\mid2T+1$ and $r=1$}\\
2T+4(2m+1)	& \mbox{if ${\rm char}\, K\nmid2T+1$ and $r=1$}\\
2T+4(m+1)	& \mbox{if ${\rm char}\, K\mid2T+1$ and $r=2$}\\
2T+4m+3		& \mbox{if ${\rm char}\, K\nmid2T+1$ and $r=2$}\\
2T		& \mbox{if $r=3$.}
\end{cases}$$
\end{thm}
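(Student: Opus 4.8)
The plan is to read off the dimension of each group ${\rm HH}^{4m+r}(A_{T})$ directly from Proposition~\ref{basis_HH}, where an explicit $K$-basis has already been exhibited in every case. Since the dimension of a vector space equals the cardinality of any basis, it suffices to count the listed elements. This reduces the theorem to combinatorial bookkeeping, organized according to whether $T=0$ or $T>0$, the residue $r\in\{0,1,2,3\}$ of the cohomological degree modulo $4$, and---when $r\in\{1,2\}$---whether ${\rm char}\,K$ divides $2T+1$.

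First I would dispose of $T=0$. Here each basis in Proposition~\ref{basis_HH}~(a) is indexed by a single running parameter, so the counts are immediate: ${\rm HH}^{4m}(A_{0})$ has $4m+1$ basis elements; ${\rm HH}^{4m+1}(A_{0})$ has $(4m+1)+(2m+2)+(2m+1)=8m+4$; ${\rm HH}^{4m+2}(A_{0})$ has $4m+3$; and ${\rm HH}^{4m+3}(A_{0})=\{0\}$. Since $2T+1=1$ is divisible by no characteristic, these land in the ``${\rm char}\,K\nmid2T+1$'' clauses of the statement, and one checks that $4m+1$, $4(2m+1)$, $4m+3$, $0$ agree with the stated values at $T=0$.

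For $T>0$ I would count each of the four bases in Proposition~\ref{basis_HH}~(b) by summing the sizes of the index sets comprising the union. The only delicate point is the middle layer $j=2m+1$ in cohomological degrees $\equiv1,2\ ({\rm mod}\,4)$: this layer contributes two extra basis vectors when ${\rm char}\,K\mid2T+1$ and only one when ${\rm char}\,K\nmid2T+1$, which is precisely the origin of the two-way split in the theorem. Carrying out the additions yields $2T+4m+1$ for $r=0$; $2T+8m+5$ respectively $2T+4(2m+1)$ for $r=1$; $2T+4(m+1)$ respectively $2T+4m+3$ for $r=2$; and $2T$ for $r=3$. The $r=0$ formula automatically subsumes the edge case $m=0$, that is, ${\rm HH}^{0}(A_{T})$ (the center of $A_{T}$), since Proposition~\ref{basis_HH}~(b)(1) remains valid there.

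As an independent verification I would recompute these numbers from the rank--nullity identity $\dim_{K}{\rm HH}^{n}(A_{T})=\dim_{K}{\rm Ker}\,{\rm Hom}_{A_{T}^{\rm e}}(\partial^{n+1},A_{T})-\dim_{K}{\rm Im}\,{\rm Hom}_{A_{T}^{\rm e}}(\partial^{n},A_{T})$, feeding in Corollaries~\ref{dim_im} and \ref{dim_kr}; in every residue class the leading $16Tm$-type terms cancel and leave exactly the values above. There is no real obstacle here, only care with the bookkeeping: one must match the degree $n=4m+r$ to the correct differentials $\partial^{n}$ and $\partial^{n+1}$ (so that, for instance, the $r=3$ count uses the kernel of $\partial^{4(m+1)}$ together with the image of $\partial^{4m+3}$), and treat ${\rm HH}^{0}(A_{T})$ as a pure kernel with no image to subtract.
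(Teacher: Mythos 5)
Your proposal is correct and follows exactly the route the paper takes: the theorem is obtained by counting the basis elements listed in Proposition~\ref{basis_HH} case by case, and your tallies (including the delicate $j=2m+1$ layer responsible for the ${\rm char}\,K\mid 2T+1$ split) all match the stated dimensions. The cross-check via Corollaries~\ref{dim_im} and \ref{dim_kr} is a sound addition but not part of the paper's (omitted) argument.
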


\begin{remark}
There is an isomorphism ${\rm HH}^{0}(A_{T})\rightarrow Z(A_{T})$; 
$\phi\mapsto\phi(\sum_{i=0}^{3}e_{i}\otimes e_{i})$ of 
algebras, where $Z(A_{T})$ is the centre of $A_{T}$. Hence, 
by Proposition~\ref{basis_HH}~(a)(1), $Z(A_{T})\simeq K[x,y]
/\langle x^{T+1},xy,y^{T+1}\rangle$. 
\end{remark}

\section{Hochschild cohomology ring modulo nilpotence of 
$A_{T}$ for $T=0$}\label{modulo_nilpotence}

Throughout this section, we keep the notation from the previous 
sections, and suppose that $T=0$, namely, we only deal with 
the Koszul self-injective algebra $A_{0}$. For simplicity, we 
denote the algebra $A_{0}$ by $A$.

Recall that the Hochschild cohomology ring of $A$ is defined 
as the graded ring ${\rm HH}^{*}(A):=\bigoplus_{t\geq0}
{\rm HH}^{t}(A)=\bigoplus_{t\geq0}{\rm Ext}_{A^{\rm e}}^{t}
(A,A)$ with the Yoneda product. Let $\mathcal{N}_{A}$ be the 
ideal generated by all homogeneous nilpotent elements in 
${\rm HH}^{*}(A)$. Note that $\mathcal{N}_{A}$ is a homogeneous 
ideal in ${\rm HH}^{*}(A)$. The purpose in this section is to 
find generators and relations of the Hochschild cohomology 
ring modulo nilpotence, ${\rm HH}^{*}(A)/\mathcal{N}_{A}$, 
of $A$. It is known that ${\rm HH}^{*}(A)/\mathcal{N}_{A}$ 
is a commutative graded algebra. We denote by ${\rm HH}^{4*}(A)$
the graded subalgebra $\bigoplus_{i\geq0}{\rm HH}^{4i}(A)$ 
of ${\rm HH}^{*}(A)$, and by $\times$ the Yoneda product in 
${\rm HH}^{*}(A)$.

\begin{thm}\label{ring_modulo_nilpotence}
There is the following isomorphism of commutative graded 
algebras{\rm:} 
\begin{align*}
&{\rm HH}^{*}(A)/\mathcal{N}_{A}\simeq{\rm HH}^{4*}(A)\\
&\simeq K[z_{0},z_{1},z_{2},z_{3},z_{4}]/\langle z_{0}z_{2}-
z_{1}^{2},\ z_{0}z_{3}-z_{1}z_{2},\ z_{0}z_{4}-z_{2}^{2},\ 
z_{0}z_{4}-z_{1}z_{3},\ z_{1}z_{4}-z_{2}z_{3},\ z_{2}z_{4}-
z_{3}^{2}\rangle
\end{align*}
with $z_{j}$ in degree $4$ for $0\leq j\leq4$. Therefore 
${\rm HH}^{*}(A)/\mathcal{N}_{A}$ is finitely generated 
as an algebra. 
\end{thm}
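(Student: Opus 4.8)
The plan is to establish the two displayed isomorphisms in turn: first identify the subalgebra ${\rm HH}^{4*}(A)$ with the determinantal ring $R:=K[z_{0},\dots,z_{4}]/\langle z_{0}z_{2}-z_{1}^{2},\ z_{0}z_{3}-z_{1}z_{2},\ z_{0}z_{4}-z_{2}^{2},\ z_{0}z_{4}-z_{1}z_{3},\ z_{1}z_{4}-z_{2}z_{3},\ z_{2}z_{4}-z_{3}^{2}\rangle$, and then show that the inclusion ${\rm HH}^{4*}(A)\hookrightarrow{\rm HH}^{*}(A)$ induces an isomorphism onto ${\rm HH}^{*}(A)/\mathcal{N}_{A}$. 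Throughout I abbreviate $w_{j}^{(m)}:=\sum_{i=0}^{3}\beta_{i,j}^{4m,0}$, so that by Proposition~\ref{basis_HH}(a) the set $\{w_{0}^{(m)},\dots,w_{4m}^{(m)}\}$ is a $K$-basis of ${\rm HH}^{4m}(A)$, and I write $z_{j}:=w_{j}^{(1)}$ $(0\leq j\leq4)$ for the resulting basis of ${\rm HH}^{4}(A)$.

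First I would compute the multiplication on $\bigoplus_{m}{\rm HH}^{4m}(A)$. For each degree-$4$ cocycle representing $z_{j}$, I would construct a comparison chain map $L_{j,\bullet}\colon Q^{\bullet+4}\to Q^{\bullet}$ lifting it (so $\partial^{k}L_{j,k}=L_{j,k-1}\partial^{k+4}$ and $\mu L_{j,0}$ represents $z_{j}$), and then read off the Yoneda product as $z_{j}\times w_{k}^{(m)}=w_{k}^{(m)}\circ L_{j,4m}$. The aim of this (lengthy) computation is the single clean formula $z_{j}\times w_{k}^{(m)}=w_{j+k}^{(m+1)}$ for all $0\leq j\leq4$ and $0\leq k\leq4m$ (which is legitimate since $j+k\leq4(m+1)$). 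Granting it, $z_{j_{1}}\times\cdots\times z_{j_{m}}=w_{j_{1}+\cdots+j_{m}}^{(m)}$ depends only on the index sum $j_{1}+\cdots+j_{m}\in\{0,\dots,4m\}$, and every value in that range is realized; hence the $z_{j}$ generate ${\rm HH}^{4*}(A)$, and the case $m=1$ gives $z_{j}z_{k}=w_{j+k}^{(2)}=z_{j'}z_{k'}$ whenever $j+k=j'+k'$. The resulting coincidences among the $15$ products $z_{j}z_{k}$ are exactly the six listed relations, which one recognizes as the $2\times2$ minors of the Hankel matrix $\left(\begin{smallmatrix} z_{0}&z_{1}&z_{2}&z_{3}\\ z_{1}&z_{2}&z_{3}&z_{4}\end{smallmatrix}\right)$. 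Thus there is a surjection of graded algebras $\bar\psi\colon R\twoheadrightarrow{\rm HH}^{4*}(A)$, where each $z_{j}$ sits in degree $4$. This product computation, together with the one in the last paragraph, is the main obstacle: everything else is formal once these formulas are in hand.

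To see $\bar\psi$ is injective I would compare Hilbert functions degree by degree. The ring $R$ is the homogeneous coordinate ring of the rational normal quartic, i.e. $R\cong K[s,t]^{(4)}$ via $z_{j}\mapsto s^{4-j}t^{j}$, so $\dim_{K}R_{m}=4m+1$; on the other hand $\dim_{K}{\rm HH}^{4m}(A)=4m+1$ by Theorem~\ref{dim_HH} (case $r=0$, $T=0$). A surjection of graded spaces that is finite-dimensional and of equal dimension in each degree is an isomorphism, so $\bar\psi$ is an isomorphism and ${\rm HH}^{4*}(A)\cong R$. In particular ${\rm HH}^{4*}(A)$ is an integral domain, hence reduced.

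It remains to prove $\mathcal{N}_{A}=B$, where $B:=\bigoplus_{n\not\equiv0\,(4)}{\rm HH}^{n}(A)$; this gives ${\rm HH}^{*}(A)/\mathcal{N}_{A}\cong{\rm HH}^{4*}(A)$. Every homogeneous element of degree $\equiv3$ vanishes by Proposition~\ref{basis_HH}(a)(4); every $\xi\in{\rm HH}^{4a+1}(A)$ has $\xi^{3}\in{\rm HH}^{12a+3}(A)=0$, so is nilpotent; and I would prove by the same comparison-map technique the key vanishing ${\rm HH}^{4a+2}(A)\times{\rm HH}^{4b+2}(A)=0$ for all $a,b\geq0$, which makes every degree-$(\equiv2)$ element square to zero. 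Hence all of $B$ consists of nilpotents, so $B\subseteq\mathcal{N}_{A}$; and since ${\rm HH}^{4*}(A)$ is reduced its only homogeneous nilpotent is $0$, so $\mathcal{N}_{A}$ is generated by $B$. Using graded-commutativity, any product $x\,g\,y$ with $g\in B$ homogeneous equals $\pm(xy)g$, so the degree-$4n$ component of $\mathcal{N}_{A}$ is $\sum_{d\not\equiv0}{\rm HH}^{4n-d}(A)\cdot{\rm HH}^{d}(A)$. For $d\equiv1$ the factor ${\rm HH}^{4n-d}(A)$ lies in degree $\equiv3$ and vanishes; for $d\equiv3$ the factor ${\rm HH}^{d}(A)$ vanishes; and for $d\equiv2$ the summand is ${\rm HH}^{\equiv2}\cdot{\rm HH}^{\equiv2}=0$ by the key vanishing. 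Therefore $\mathcal{N}_{A}\cap{\rm HH}^{4*}(A)=0$, whence $\mathcal{N}_{A}=B$ and ${\rm HH}^{*}(A)/\mathcal{N}_{A}\cong{\rm HH}^{4*}(A)\cong R$, which is finitely generated.
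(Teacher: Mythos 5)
Your proposal is correct in outline, and for the core computation --- the ring structure of ${\rm HH}^{4*}(A)$ --- it is essentially the paper's argument: the paper introduces the same classes $z_{j}=\sum_{i=0}^{3}\beta_{i,j}^{4,0}$, the same liftings $\sigma_{j}^{k}\colon Q^{k+4}\to Q^{k}$, $\mathfrak{a}_{r,s}^{k+4}\mapsto\mathfrak{a}_{r,s-j}^{k}$, and proves exactly your formula that a product $z_{i_{1}}\times\cdots\times z_{i_{w}}$ is the basis element of ${\rm HH}^{4w}(A)$ indexed by $\sum_{p}i_{p}$. Where you genuinely differ: (i) for injectivity of $K[z_{0},\dots,z_{4}]/\langle\text{relations}\rangle\to{\rm HH}^{4*}(A)$ the paper only says ``considering all possible elements''; your Hilbert-function comparison with the fourth Veronese $K[s,t]^{(4)}$, using $\dim_{K}R_{m}=4m+1=\dim_{K}{\rm HH}^{4m}(A)$ from Theorem~\ref{dim_HH}, is a cleaner way to close that step. (ii) For the first isomorphism the paper observes that every basis cocycle in degree $\not\equiv0\ ({\rm mod}\,4)$ has image in $\mathfrak{r}_{A}$ and invokes \cite[Proposition~4.4]{SS2} to conclude nilpotence, whereas you argue by hand; your cube argument in degrees $\equiv1$ is fine, but your treatment of degrees $\equiv2$ rests on the unproved ``key vanishing'' ${\rm HH}^{4a+2}(A)\times{\rm HH}^{4b+2}(A)=0$. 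That claim is true and routine --- such classes are represented by cocycles with image in $\mathfrak{r}_{A}^{2}$, the liftings respect the path-length grading since the resolution is linear, and $\mathfrak{r}_{A}^{3}=0$, so the product is a bimodule map $Q^{4(a+b+1)}\to A$ with image in $\mathfrak{r}_{A}$, which must vanish since $e_{i}Ae_{i}=Ke_{i}$ and ${\rm Im}\,{\rm Hom}_{A^{\rm e}}(\partial^{4(a+b+1)},A)=0$ --- but it is the one extra obligation your route incurs that the paper's citation sidesteps. Finally, your verification that $\mathcal{N}_{A}\cap{\rm HH}^{4*}(A)=0$ via the decomposition of the ideal generated by $B$ is more than is strictly needed (in a graded-commutative ring the ideal generated by homogeneous nilpotents consists of nilpotents, so reducedness of ${\rm HH}^{4*}(A)$ already gives the trivial intersection), though it is valid. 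Net: same key computation, a sharper injectivity argument, and a self-contained nilpotence argument that requires one additional, but true and easy, product vanishing.
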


\begin{proof}
First we establish the second isomorphism. For $0\leq j\leq4$ 
let $z_{j}:=\sum_{i=0}^{3}\beta_{i,j}^{4,0}\in{\rm HH}^{4}(A)$, 
and for $k\geq0$ define the homomorphism of $A$-$A$-bimodules 
$\sigma^{k}_{j}: Q^{k+4}\rightarrow Q^{k}$ by 
$$\mathfrak{a}_{r,s}^{k+4}\mapsto\begin{cases}
\mathfrak{a}_{r,t}^{k} 	& \mbox{if $s=j+t$ for some integer 
$t$ with $0\leq t\leq k$}\\
0			& \mbox{otherwise.}
\end{cases}$$
Then $\sigma^{k}_{j}$ are liftings of $z_{j}$, namely, 
$z_{j}=\sigma_{j}^{0}\partial^{0}$ and $\sigma_{j}^{l}
\partial^{l+5}=\partial^{l+1}\sigma_{j}^{l+1}$ hold for 
$l\geq0$. Thus, for integers $0\leq u,v\leq4$, it follows 
that the composite $z_{v}\sigma^{4}_{u}: Q^{8}\rightarrow A$ 
is given by 
$$\mathfrak{a}_{r,s}^{8}\mapsto
\begin{cases}
e_{r}	& \mbox{if $s=u+v$}\\
0	& \mbox{otherwise.}
\end{cases}$$
Note that the product $z_{v}\times z_{u}\in {\rm HH}^{8}(A)$ 
is represented by this map.

Now, for each positive integer $t$, let $i_{t}$ be an integer 
with $0\leq i_{t}\leq4$. Then, for any $w\geq2$, it can be 
shown by induction on $w$ that the product $z_{i_{1}}\times 
\cdots\times z_{i_{w}}$ is represented by the map 
$$Q^{4w}\rightarrow A;\ \mathfrak{a}_{r,s}^{4w}
\mapsto\begin{cases}
e_{r}	& \mbox{if $s=\sum_{p=1}^{w}i_{p}$}\\
0	& \mbox{otherwise.}
\end{cases}$$ 
This tells us that ${\rm HH}^{4*}(A)$ is generated by 
$z_{0},\ldots,z_{4}\in{\rm HH}^{4}(A)$. Also, for two products 
$z_{j_{1}}\times\cdots\times z_{j_{u}}$ and $z_{k_{1}}\times
\cdots\times z_{k_{u}}$ in ${\rm HH}^{4u}(A)$ (where $u\geq1$ 
and $0\leq j_{p}, k_{p}\leq4$ for $1\leq p\leq u$), we have 
that $z_{j_{1}}\times\cdots\times z_{j_{u}}=z_{k_{1}}\times
\cdots\times 
z_{k_{u}}$ if and only if $\sum_{p=1}^{u}j_{p}=\sum_{p=1}^{u}
k_{p}$. This gives the relation $z_{i}z_{j}-z_{k}z_{l}$ for 
$0\leq i$, $j$, $k$, $l\leq4$ such that $i+j=k+l$, and then, 
considering all possible elements in ${\rm HH}^{4*}(A)$, we 
get the second isomorphism.

Now, using the second isomorphism, we easily see that all 
elements in ${\rm HH}^{4*}(A)$ are not nilpotent. Furthermore, 
for $m\geq0$ and $l=1,2,3$, the images of all basis elements 
of ${\rm HH}^{4m+l}(A)$ of Proposition~\ref{basis_HH}~(a) are 
in $\mathfrak{r}_{A}$, so that, by \cite[Proposition~4.4]{SS2}, 
${\rm HH}^{4m+l}(A)$ is contained in $\mathcal{N}_{A}$. Hence 
we have the first isomorphism. Therefore the proof of the 
theorem is completed. 
\end{proof}

\noindent
Finally we consider the graded centre of the Ext algebra 
$E(A):=\bigoplus_{i\geq0}{\rm Ext}^{i}_{A}(A/\mathfrak{r}_{A},
A/\mathfrak{r}_{A})$ of $A$. Recall that the graded centre 
$Z_{\rm gr}(E(A))$ of $E(A)$ is the subring of $E(A)$ 
generated by all homogeneous elements $x$ in $E(A)$ such that 
$xy=(-1)^{|x|\cdot|y|}yx$ for all homogeneous elements $y\in 
E(A)$, where $|x|$ and $|y|$ denote the degree of $x$ and $y$, 
respectively. Let $\mathcal{N}'_{A}$ be the ideal of $Z_{\rm gr}
(E(A))$ generated by all homogeneous nilpotent elements. Since 
$A$ is a Koszul algebra, we know from \cite{BGSS} that 
$Z_{\rm gr}(E(A))/\mathcal{N}'_{A}\simeq{\rm HH}^{*}(A)/
\mathcal{N}_{A}$ as graded rings. Therefore, by 
Theorem~\ref{ring_modulo_nilpotence}, we have the following:

\begin{cor} 
There is the following isomorphism of commutative graded 
rings: 
\begin{align*}
&Z_{\rm gr}(E(A))/\mathcal{N}'_{A}\simeq{\rm HH}^{4*}(A)\\
&\simeq K[z_{0},z_{1},
z_{2},z_{3},z_{4}]/\langle z_{0}z_{2}-z_{1}^{2},\ z_{0}
z_{3}-z_{1}z_{2},\ z_{0}z_{4}-z_{2}^{2},\ z_{0}z_{4}-z_{1}
z_{3},\ z_{1}z_{4}-z_{2}z_{3},\ z_{2}z_{4}-z_{3}^{2}
\rangle
\end{align*}
with $z_{j}$ in degree $4$ for $0\leq j\leq4$. 
\end{cor}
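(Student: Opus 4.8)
The plan is to assemble the corollary by composing two isomorphisms that are already available, so that no fresh computation is needed. First I would invoke Proposition~\ref{koszul}, which guarantees that $A=A_{0}$ is a Koszul self-injective algebra; this is precisely the hypothesis under which the general theorem of \cite{BGSS} applies. That theorem, recalled in the paragraph immediately preceding the corollary, provides the graded-ring isomorphism $Z_{\rm gr}(E(A))/\mathcal{N}'_{A}\simeq{\rm HH}^{*}(A)/\mathcal{N}_{A}$, transporting the quotient of the graded centre of the Ext algebra $E(A)$ by its nilpotent ideal onto the Hochschild cohomology ring modulo nilpotence.

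Second, I would appeal to Theorem~\ref{ring_modulo_nilpotence}, which has already identified ${\rm HH}^{*}(A)/\mathcal{N}_{A}$ both with the graded subalgebra ${\rm HH}^{4*}(A)=\bigoplus_{i\geq0}{\rm HH}^{4i}(A)$ and with the explicit presentation $K[z_{0},z_{1},z_{2},z_{3},z_{4}]/\langle z_{0}z_{2}-z_{1}^{2},\dots,z_{2}z_{4}-z_{3}^{2}\rangle$, each generator $z_{j}$ lying in degree $4$. Splicing this chain onto the isomorphism of the first step yields exactly the displayed assertion $Z_{\rm gr}(E(A))/\mathcal{N}'_{A}\simeq{\rm HH}^{4*}(A)\simeq K[z_{0},\dots,z_{4}]/\langle\cdots\rangle$, since a composite of graded-ring isomorphisms is again a graded-ring isomorphism.

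The remaining point to address is the qualifier \emph{commutative}: I would note that the middle and right-hand terms are commutative graded algebras by Theorem~\ref{ring_modulo_nilpotence}, so the composite isomorphism forces $Z_{\rm gr}(E(A))/\mathcal{N}'_{A}$ to be commutative as well, matching the statement. The only step demanding any care is verifying the Koszul hypothesis required to invoke \cite{BGSS}, but this is supplied verbatim by Proposition~\ref{koszul}; accordingly, I expect no genuine obstacle, the argument being a purely formal transport of structure across the two established isomorphisms once Theorem~\ref{ring_modulo_nilpotence} is in hand.
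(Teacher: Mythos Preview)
Your proposal is correct and follows essentially the same approach as the paper: the paper simply notes that $A$ is Koszul (Proposition~\ref{koszul}), invokes \cite{BGSS} to obtain $Z_{\rm gr}(E(A))/\mathcal{N}'_{A}\simeq{\rm HH}^{*}(A)/\mathcal{N}_{A}$, and then applies Theorem~\ref{ring_modulo_nilpotence}. Your added remark on commutativity is a harmless elaboration of what the paper leaves implicit.
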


\begin{remark}
We notice that our algebra $A_{T}$ $(T\geq0)$ belongs to the 
class of more general algebras $B_{k,s}$ defined as follows: 
For $s\geq1$, let $\Delta_{s}$ be the quiver 
$$\begin{xy}
(0,14)		*+{0}="e0",
(16,10)		*+{1}="e1",
(21,0)		*+{2}="e2",
(16,-10)	*+{3}="e3",
(0,-14)		*+{4}="e4",
(-16,-10)	*+{5}="e5",
(-21,0)		*+{6}="e6",
(-16,10)	*+{s-1}="es-1",
(8,15)		*{a_{0}},
(8,8)		*{b_{0}},
(22,6.5)	*{a_{1}},
(15.5,3.5)	*{b_{1}},
(22,-6.5)	*{a_{2}},
(15.5,-3)	*{b_{2}},
(9,-15.5)	*{a_{3}},
(9,-8)		*{b_{3}},
(-8,15.5)	*{a_{s-1}},
(-7,8)		*{b_{s-1}},
(-21.5,-7.5)	*{a_{5}},
(-14.5,-4)	*{b_{5}},
(-7,-15.75)	*{a_{4}},
(-7,-8.25)	*{b_{4}},
%(9.5,14.5)	*{b_{0}},
%(22,9)		*{a_{1}},
%(15,9)		*{b_{1}},
%(9.5,-3.5)	*{a_{2}},
%(9.5,3.5)	*{b_{2}},
%(-3.5,9)	*{a_{3}},
%(3.5,9)	*{b_{3}},
(-20,3)		*{\cdot},
(-19,4.5)	*{\cdot},
(-18,6)		*{\cdot},
\ar @<1mm> "e0"; "e1",
\ar @<-1mm> "e0"; "e1",
\ar @<1mm> "e1"; "e2",
\ar @<-1mm> "e1"; "e2",
\ar @<1mm> "e2"; "e3",
\ar @<-1mm> "e2"; "e3",
\ar @<1mm> "e3"; "e4",
\ar @<-1mm> "e3"; "e4",
\ar @<1mm> "e4"; "e5",
\ar @<-1mm> "e4"; "e5",
\ar @<1mm> "e5"; "e6",
\ar @<-1mm> "e5"; "e6",
\ar @<1mm> "es-1"; "e0",
\ar @<-1mm> "es-1"; "e0"
\end{xy}$$
and for $k\geq2$ let $J_{k}:=\langle xy,x^{k}+y^{k},yx\rangle
\subseteq K\Delta_{s}$, where we put $x:=\sum_{i=0}^{s-1}a_{i}
\in K\Delta_{s}$ and $y:=\sum_{i=0}^{s-1}b_{i}\in K\Delta_{s}$. 
Define the algebra $B_{k,s}:=K\Delta_{s}/J_{k}$. Then $B_{k,s}$ 
is a self-injective algebra.

The results in this paper provide the computations of the 
Hochschild cohomology groups of $B_{k,s}$ for $s=4$ and 
$k=4T+2$ $(T\geq0)$. On the other hand, if $s=2$ and $k=2m$ 
$(m\geq1)$, then we know from \cite{ST} generators and 
relations of the Hochschild cohomology ring of $B_{2m,2}$. 
(In fact $B_{2m,2}$ is isomorphic to the algebra $\Lambda_{N}$ 
discussed in \cite{ST}, where $N=m$ and the quiver of 
$\Lambda_{N}$ has $2$ vertices.) For the other cases, the 
computations of the Hochschild cohomologies seem to be 
unknown. 
\end{remark}

%%%%%%%%%%%%%%%%%%%%%%%%%%%%%%%%%%%%%%%%%%%%%%%%%%%%%%%%%%%%
%%%%%%%%%%%%%%%%%%%%%%%%%%%%%%%%%%%%%%%%%%%%%%%%%%%%%%%%%%%%
%%%%%%%%%%%%%%%%%%%%%%%%%%%%%%%%%%%%%%%%%%%%%%%%%%%%%%%%%%%%
                   %%%%%%%%%%%%%%%%%%%%%%%%%%%%%%%%%%%%%%%%%
\vspace{1mm}       %%%%%%%%%%%%%%%%%%%%%%%%%%%%%%%%%%%%%%%%%
                   %%%%%%%%%%%%%%%%%%%%%%%%%%%%%%%%%%%%%%%%%
\noindent%%%%%%%%%%%%%%%%%%%%%%%%%%%%%%%%%%%%%%%%%%%%%%%%%%%
{\bf Acknowledgement}\\[3mm]%%%%%%%%%%%%%%%%%%%%%%%%%%%%%%%%
The author would like to thank the referee for valuable 
suggestions and helpful comments. 
%%%%%%%%%%%%%%%%%%%%%%%%%%%%%%%%%%%%%%%%%%%%%%%%%%%%%%%%%%%%
%%%%%%%%%%%%%%%%%%%%%%%%%%%%%%%%%%%%%%%%%%%%%%%%%%%%%%%%%%%%
%%%%%%%%%%%%%%%%%%%%%%%%%%%%%%%%%%%%%%%%%%%%%%%%%%%%%%%%%%%%

%% The Appendices part is started with the command \appendix;
%% appendix sections are then done as normal sections
%% \appendix

%% \section{}
%% \label{}

%% References
%%
%% Following citation commands can be used in the body text:
%% Usage of \cite is as follows:
%%   \cite{key}          ==>>  [#]
%%   \cite[chap. 2]{key} ==>>  [#, chap. 2]
%%   \citet{key}         ==>>  Author [#]

%% References with bibTeX database:

\bibliographystyle{model1a-num-names}
\bibliography{<your-bib-database>}

%% Authors are advised to submit their bibtex database files. They are
%% requested to list a bibtex style file in the manuscript if they do
%% not want to use model1a-num-names.bst.

%% References without bibTeX database:

\end{document}